\newcommand {\C}{\mathbb C}
\newcommand {\R}{\mathbb R}
\newcommand{\Ha}{\mathbb{H}^n}
\newcommand {\vanish}[1]{\relax}
\newtheorem{theorem}{Theorem}[section]
\newtheorem{lemma}[theorem]{Lemma}
\newtheorem{prop}[theorem]{Proposition}
\theoremstyle{definition}
\numberwithin{equation}{section}
\protected\def\ignorethis#1\endignorethis{}
\let\endignorethis\relax
\title[Dimension free bounds]{Dimension free estimates for the vector-valued Hardy--Littlewood maximal function on the Heisenberg group}
\author[Ganguly]{Pritam Ganguly}
	\address[Pritam Ganguly]{ Institut f\"ur Mathematik \\
Universit\"at Paderborn \\ Warburger Str. 100,
33098 Paderborn, Germany.}\email{pritam1995.pg@gmail.com/ pritamg@math.uni-paderborn.de}
\author[Ghosh]{Abhishek Ghosh}
\address[Abhishek Ghosh]{Department of Mathematics, Indian Institute of Technology Jammu, Jagti, J \& K, 181221, India.}
\email{abhishek.ghosh@iitjammu.ac.in/ abhi170791@gmail.com}
\keywords{Dimension-free estimates, Hardy-Littlewood maximal function, Heisenberg group}
	\subjclass[2020]{Primary: 43A80, 42B25. Secondary: 22E25, 42B35}
\begin{document}

\begin{abstract}
In this article, we establish dimension-free Fefferman-Stein inequalities for the Hardy-Littlewood maximal function associated with averages over Kor\'anyi balls in the Heisenberg group. We also generalize the result to more general UMD lattices.  As a key stepping stone, we establish the $L^p$- boundedness of the vector-valued Nevo-Thangavelu spherical maximal function, which plays a crucial role in our proofs of the main theorems.
\end{abstract}
	
\maketitle	


\section{Introduction}
A fundamental pursuit in modern real-variable harmonic analysis is the development of dimension-free estimates for various operators. This line of research was motivated by the challenge of controlling the growth of the norm of the classical Hardy--Littlewood maximal operator $M_{\mathbb{R}^n}$, defined for a locally integrable function $f$ on $\mathbb{R}^n $ by  
\[
M_{\mathbb{R}^n}f(x) = \sup_{r > 0} \frac{1}{|B(x, r)|} \int_{B(x, r)} |f(y)| \, dy,
\]  
where $B(x, r)$ denotes the Euclidean ball centered at $ x $ with radius $r $. It is well-known that arguments involving traditional covering lemmas typically result in exponential growth of the $L^p$-operator norm of $M_{\mathbb{R}^n} $ with the dimension $n$.  This prompted the search for estimates whose bounds are independent of the dimension.  

Stein \cite{SteinSph} was the first to announce dimension-free bounds for the maximal operator $M_{\mathbb{R}^n}$, and a complete, detailed proof was later provided by Stein and Str\"omberg \cite{SS} in 1982. More precisely, they established that $$ \|M_{\mathbb{R}^n}\|_{L^p \to L^p} \leq C (p)\quad\quad (1<p\leq \infty) $$ where the constant $C (p)$ depends only on $ p $ and is independent of the dimension $ n $. 

This foundational work paved the way for broader developments in harmonic analysis, motivating the exploration of dimension-free estimates in more general settings. These include maximal functions where Euclidean balls are replaced by general convex sets, as well as other fundamental operators such as Calder\'on--Zygmund operators. Of course, the study of dimension-free bounds for such operators introduces new challenges, but the pursuit of understanding the underlying geometric and analytic structures in high-dimensional settings has driven research toward establishing such bounds. This topic has gathered significant attention over the past few years, with notable contributions from Bourgain \cite{Bourgain87} regarding convex symmetric bodies in $\mathbb{R}^n$ and recently in \cite{Bourgain-GAFA} the authors obtained dimension-free estimates for variational estimates of maximal functions over symmetric convex bodies. For their discrete couterparts we refer the article \cite{Bourgain-AJM}. For a comprehensive overview of recent developments, see the surveys \cite{Bourgain-survey, DGM}.

The point of departure and primary motivation  of this article lie in the realm of generalising classical results to the vector-valued setting. Extending the theory of singular integrals and maximal functions to the vector-valued setting is a vast and important program in real-variable harmonic analysis, laid by the foundational works of Benedek--Calder\'on--Panzone \cite{BCP}, Rivière \cite{Riviere}, Fefferman--Stein \cite{FS} and many others. A fundamental result in this direction, known as the Fefferman--Stein inequality, states that the inequality
\[
\biggl\|\Bigl(\sum_{j=1}^{\infty}|M_{\mathbb{R}^n}f_j(\cdot)|^q\Bigr)^\frac{1}{q}\biggr\|_{L^p(\mathbb R^n)}\leq C(n, p, q)\, \biggl\|\Bigl(\sum_{j=1}^{\infty}|f_j(\cdot)|^q\Bigr)^\frac{1}{q}\biggr\|_{L^p(\mathbb R^n)}
\]
 holds true for all $1<p, q<\infty.$ 
See \cite{FS} for other end-point results. This setting introduces additional complexities, as it requires uniform control over operator norms acting on vector-valued functions. Recently, in \cite{DK}, building upon the work of \cite{SS}, the authors proved the following dimension-free estimate:
\begin{theorem}
    \label{DK-dimfree-Rn}
    Let $1<p, q<\infty.$ Then there exists a constant $C(p, q),$ independent of dimension, such that 
\[
\biggl\|\Bigl(\sum_{j=1}^{\infty}|M_{\mathbb{R}^n}f_j(\cdot)|^q\Bigr)^\frac{1}{q}\biggr\|_{L^p(\mathbb R^n)}\leq C(p, q)\, \biggl\|\Bigl(\sum_{j=1}^{\infty}|f_j(\cdot)|^q\Bigr)^\frac{1}{q}\biggr\|_{L^p(\mathbb R^n)}
\]
for any  sequence $(f_j)_{j\geq 1}$  of measurable functions defined on $\mathbb{R}^n$ such that $\left(\sum_{j=1}^\infty|f_j(\cdot)|^q\right)^{\frac1q}\in L^p(\mathbb{R}^n).$
\end{theorem}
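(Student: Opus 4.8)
\emph{Strategy.} The plan is to prove the formally stronger statement in which the sequence space $\ell^{q}$ is replaced by an arbitrary UMD Banach lattice $X$; the case $X=\ell^{q}$ ($1<q<\infty$) is Theorem~\ref{DK-dimfree-Rn}. Write $A_{t}$ for the average over the ball of radius $t$, so that $M_{\mathbb R^{n}}f=\sup_{t>0}A_{t}(|f|)$, and note that on the Fourier side $A_{t}=m(t\sqrt{-\Delta})$, where $\sqrt{-\Delta}$ is defined by the functional calculus (symbol $|\xi|$) and $m$ is the radial profile of the Fourier transform of the normalised indicator of the unit ball, explicitly $m(s)=c_{\nu}\,s^{-\nu}J_{\nu}(s)$ with $\nu=n/2$ and $c_{\nu}=2^{\nu}\Gamma(\nu+1)$. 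One cannot deduce the vector-valued estimate from the scalar one by simply commuting $\|\cdot\|_{X}$ past $A_{t}$: in $(M_{\mathbb R^{n}}f_{j})_{j}$ each coordinate is evaluated at \emph{its own} optimal radius, and the $\ell^{q}$-norm of the resulting sequence is genuinely larger than $\sup_{t}\bigl\|(A_{t}|f_{j}|)_{j}\bigr\|_{\ell^{q}}$. The idea, in the spirit of Stein--Str\"omberg, is therefore to split $m$ into a ``main'' piece, which behaves like a symmetric diffusion semigroup (whose maximal operator is dimension-free \emph{and}, by positivity, automatically so on $L^{p}(\mathbb R^{n};X)$), and an ``oscillatory'' piece, which is so small and so tightly controlled — thanks to uniform-in-$n$ Bessel asymptotics — that it is handled by the dimension-free vector-valued Littlewood--Paley / $H^{\infty}$-calculus of $-\Delta$ on $L^{p}(\mathbb R^{n};X)$.

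\emph{The decomposition.} Fix a smooth $\psi$ with $\psi\equiv1$ near $0$ and $\operatorname{supp}\psi\subseteq[0,1]$, and split $m=m^{\flat}+m^{\sharp}$ with $m^{\flat}(s)=m(s)\,\psi(s/(c_{0}n))$ for a small absolute constant $c_{0}$, so $m^{\flat}$ lives where $s\lesssim n$. The two facts that make everything dimension-free are the uniform asymptotics of Bessel functions of large order: first, $0\le m(s)\le C\,e^{-\kappa s^{2}/n}$ for $0\le s\le n/2$, with absolute $C,\kappa>0$ (this matches the small-argument expansion $m(s)=1-\frac{s^{2}}{2(n+2)}+\cdots$ and, past it, the exponential decay of $J_{\nu}$ below its turning point $s\approx\nu$); second, for $s\gtrsim n$ the prefactor $c_{\nu}s^{-\nu}$ is exponentially small in $n$ while $J_{\nu}(s)$ and the gains $s\partial_{s}$ thereof are polynomially bounded uniformly in $n$, so $m^{\sharp}$ and its rescaled derivatives are exponentially small in $n$.

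\emph{The two pieces.} For $m^{\flat}$ one arranges that (a completely monotone majorant of) $m^{\flat}(t\sqrt{-\Delta})$ is an honest \emph{positive} superposition $\int_{0}^{\infty}e^{u t^{2}\Delta}\,d\mu_{t}(u)$ of heat operators, with total mass $\sup_{t}\|\mu_{t}\|$ bounded independently of $n$ (the Gaussian domination above is exactly what makes the scales match and the mass $n$-uniform; one must replace a sharp frequency cutoff by a smooth completely-monotone majorant, the small leftover being absorbed into $m^{\sharp}$). Given this, Minkowski's integral inequality and the positivity of the heat kernels yield, for $X$-valued $F$,
\[
\sup_{t>0}\bigl\|m^{\flat}(t\sqrt{-\Delta})F(x)\bigr\|_{X}\ \le\ C\,\sup_{v>0}e^{v\Delta}\bigl(\|F(\cdot)\|_{X}\bigr)(x)\ =:\ C\,\mathcal M_{\mathrm{heat}}\bigl(\|F(\cdot)\|_{X}\bigr)(x),
\]
and Stein's maximal theorem for the (symmetric, Markovian) heat semigroup, whose constant depends only on $p$, gives the bound on $L^{p}(\mathbb R^{n};X)$ with no dependence on $n$ — valid, in fact, for any Banach space $X$. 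For $m^{\sharp}$, after the square-function reduction of $\sup_{t}|G_{t}|$ to $\bigl(\int_{0}^{\infty}\|t\partial_{t}G_{t}\|_{X}^{2}\,\frac{dt}{t}\bigr)^{1/2}$ plus an endpoint term, the dimension-free $H^{\infty}(\Sigma_{\theta})$-calculus of $-\Delta$ on $L^{p}(\mathbb R^{n};X)$ for UMD lattices $X$ bounds the associated maximal operator on $L^{p}(\mathbb R^{n};X)$ by the ($n$-uniform) Mikhlin norms of $m^{\sharp}$ and $s\partial_{s}m^{\sharp}$, which are exponentially small in $n$. Adding the two contributions proves Theorem~\ref{DK-dimfree-Rn} with a constant depending only on $p,q$ (and, in the general statement, on $p$ and the UMD constant of $X$).

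\emph{The main obstacle.} It is the extraction of dimension-independence. The two obvious routes both fail: a Vitali/Besicovitch covering argument for $M_{\mathbb R^{n}}$ costs a factor $\gtrsim C^{n}$, and dominating $M_{\mathbb R^{n}}f$ \emph{pointwise} by a single heat- or Poisson-semigroup maximal operator costs a factor that still grows with $n$ (the crude comparison gives $\sim\sqrt n$ with the heat semigroup), because in high dimension a Euclidean ball and a Gaussian of any single scale have incomparable profiles, so no one auxiliary operator captures all radii simultaneously. The decomposition circumvents this by peeling off the part of $A_{t}$ that genuinely \emph{is} a positive superposition of heat operators — so its vector-valued bound comes for free and without any $n$ — from a truly oscillatory but tiny remainder treated by harmonic-analytic technology whose constants are known to be $n$-free. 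The most delicate point, and where I expect the bulk of the work to lie, is the construction in the previous paragraph: producing, with $n$-uniform mass, a completely monotone majorant of the ball multiplier's main part (complicated by the fact that $m$ changes sign and is \emph{not} completely monotone in $|\xi|^{2}$, and by the need to remove the frequency cutoff without destroying complete monotonicity), together with verifying that the vector-valued square-function and $H^{\infty}$-calculus estimates used for the oscillatory piece really are independent of $n$.
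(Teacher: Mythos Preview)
First, note that this theorem is not proved in the present paper: it is quoted as the main result of Deleaval--Kriegler \cite{DK}, and the paper only sketches their method before adapting it to the Heisenberg group. That method, as described here, is the Stein--Str\"omberg route: dominate $M_{\mathbb R^n}$ by an $O(n)$-average of lower-dimensional weighted maximal operators $M_k^A$ (inequality \eqref{SS1}), control each $M_k^A$ via the spherical maximal operator in $\mathbb R^{n-k}$, and choose $n-k$ depending only on $p,q$. The vector-valued input is then a bound for the spherical maximal operator on $L^p(\ell^q)$ in a \emph{fixed} dimension, obtained by a Rubio de Francia--type dyadic frequency decomposition, Bessel asymptotics, and complex interpolation between the scalar $L^2$ estimate and the Fefferman--Stein inequality. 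This is exactly the template the paper follows for $\mathbb H^n$.

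Your proposal is a genuinely different strategy --- a direct multiplier decomposition of $A_t=m(t\sqrt{-\Delta})$ into a heat-like piece and an oscillatory remainder handled by $H^\infty$-calculus, closer in spirit to Carbery/Bourgain than to Stein--Str\"omberg. But as written it has a real gap, and you have correctly located it. For the main piece you need an \emph{equality} $m=\widetilde m+m^\sharp$ with $\widetilde m$ completely monotone in $s^2$ (so that $\widetilde m(t\sqrt{-\Delta})$ truly is a positive superposition of heat operators and the $X$-valued bound follows by positivity) \emph{and} with $m^\sharp=m-\widetilde m$ small, uniformly in $n$, in the square-function norms $\int_0^\infty |m^\sharp(s)|^2\,\tfrac{ds}{s}$ and $\int_0^\infty |s\,\partial_s m^\sharp(s)|^2\,\tfrac{ds}{s}$. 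Knowing only $0\le m(s)\le Ce^{-\kappa s^2/n}$ on $[0,c_0 n]$ gives a completely monotone \emph{majorant}, but multiplier majorization does not yield pointwise domination of the convolution, so you cannot simply ``round up'' to the Gaussian. If instead you take $\widetilde m(s)=e^{-s^2/(2(n+2))}$ and throw $m-\widetilde m$ into $m^\sharp$, you must control this difference on the whole range $s\in[0,c_0 n]$, not just near $0$ and for $s\gtrsim n$; the intermediate regime $\sqrt n\lesssim s\lesssim n$ is precisely where neither the Taylor expansion nor the exponential smallness helps, and no estimate is supplied. A secondary issue: the dimension-free $H^\infty$-calculus / square-function bounds for $-\Delta$ on $L^p(\mathbb R^n;X)$ that you invoke for the oscillatory piece are true but nontrivial (essentially dimension-free vector-valued Riesz transforms), and deserve a reference or argument rather than an assertion.

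The Stein--Str\"omberg/\cite{DK} approach sidesteps both difficulties: by freezing the effective dimension to $n_0(p,q)$ via rotation averaging, all the hard analysis (vector-valued spherical maximal bounds) occurs in a single dimension, and no uniform-in-$n$ multiplier estimates are needed at all.
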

In the same paper \cite{DK}, Deleaval and Kriegler extend the aforementioned theorem by replacing the classical $\ell^q$ space with a general UMD Banach lattice. Furthermore, they establish an analogous result on $\mathbb{R}^{n+1}$, where the standard Euclidean distance is substituted with the Carnot-Carathéodory and Kor\'anyi distances associated with the framework of Grushin operators. See Theorem 4 of \cite{DK}. However, the analogue of Theorem \ref{DK-dimfree-Rn} in the context of the Heisenberg group--a close non-commutative relative of Euclidean space--left as an intriguing open problem in \cite{DK}.

The primary objective of this article is to investigate dimension-free estimate for the vector-valued  the Hardy-Littlewood maximal function on the Heisenberg group.  Notably, the work by \cite{DK} has been a significant source of motivation for our study. To provide context and clearly state our main results, we introduce certain notations at this stage, though detailed explanations are deferred to Section 2. Let $ \mathbb{H}^n:= \mathbb{C}^n \times \mathbb{R} $ denote the $(2n + 1)$-dimensional Heisenberg group, equipped with the group law defined for $ x, y \in \mathbb{H}^n $ by  
\[
x \cdot y := \left( \underline{x} + \underline{y},~ \overline{x} + \overline{y} + \frac{1}{2} \, \underline{x}^t J \underline{y} \right),
\]  
where $ J $ denotes the standard $ (2n \times 2n) $-symplectic matrix, that is, $J=\begin{bmatrix}0& -Id\\
Id&0\end{bmatrix}$.  Here and throughout this article, we adopt the convention of denoting the non-central component of $ x \in \mathbb{H}^n $ by $ \underline{x} \in \mathbb{C}^n $ and the central component by $ \overline{x} \in \mathbb{R} $. The Haar measure on $\mathbb{H}^n$ coincides with the Lebesgue measure $dx=d\underline{x}\,d\overline{x}$. We can equip $\Ha$ with a left-invariant metric induced by
the Kor\'anyi norm  defined by
 \begin{equation*}
     |x|:= \left( \|\underline{x} \|^4 + \overline{x}^2 \right)^{\frac{1}{4}} \quad\quad (x=(\underline{x}, \overline{x}) \in \mathbb{H}^n),
 \end{equation*}
where $\| \underline{x}  \|$ denotes the Euclidean norm of $\underline{x} \in \mathbb{C}^{n}$. In fact, this makes $\Ha$ a space of homogeneous type.  

 For $a\in \Ha$,  we denote the ball of radius $r>0$ and centered at $a$ by  $B(a, r):=\{x\in \Ha: |a^{-1} x|<r\},$ moreover, one has $|B(a,r)| = C_{Q} \, r^{Q},$ where $Q=(2n + 2)$ is known as the \textit{homogeneous dimension} of $\Ha$. Consequently, for $f\in L^1_{loc}(\Ha)$, the (centered) Hardy-Littlewood maximal function $\mathcal{M}f$ is defined by 
\begin{align}
\mathcal{M}f(x):=\sup_{r>0}\frac{1}{|B(x, r)|}\int_{B(x, r)} |f(y)|\, dy\quad\quad(x\in \Ha),   
\end{align}
where $|B(x, r)|$ denotes the measure of the ball. The standard $L^p$ boundedness for $p > 1$ and the weak $(1,1)$ boundedness of $\mathcal{M}$ follow from the more general theory established for spaces of homogeneous type, where an appropriate adaptation of the covering lemma is employed. For details, see Stein \cite{SteinBook}. Additionally, Li \cite{HQLi-Heisenberg} extended the work of Stein and Strömberg \cite{SS} to the Heisenberg group. By establishing a connection between ball averages and the Poisson semigroup and utilizing the Hopf-Dunford-Schwartz maximal ergodic theorem, Li derived weak $(1,1)$ estimates with explicit bounds that depend linearly on the dimension. Subsequently, building on the framework of \cite{SS}, Zienkiewicz \cite{Z} resolved the problem of dimension-free bounds for the $L^p$ operator norm of $\mathcal{M}$. This result addressed a question initially raised by Cowling, as noted in \cite{Z}.  On the vector-valued side, the Fefferman-Stein inequality for the Hardy-Littlewood maximal operator was established by Grafakos et al. \cite{Gra-Scand} in the broader setting of spaces of homogeneous type. Interestingly, through a linearization technique, they derived this result from vector-valued estimates for singular integral operators, which were also proved in the same paper. However, their approach was not designed to yield dimension-independent bounds. We shall state that theorem in the context of the Heisenberg group in the next section, as it will be useful for our purposes. As our first main result, we now present the following dimension-free estimate for the vector-valued maximal function on $\Ha$. 
\begin{theorem}
\label{Main-thm}
Let $1<p, q<\infty.$ Then there exist a constant $C(p, q),$ independent of dimension, such that
\[
\biggl\|\Bigl(\sum_{j=1}^{\infty}|\mathcal{M}f_j(\cdot)|^q\Bigr)^\frac{1}{q}\biggr\|_{L^p(\Ha)}\leq C(p, q)\, \biggl\|\Bigl(\sum_{j=1}^{\infty}|f_j(\cdot)|^q\Bigr)^\frac{1}{q}\biggr\|_{L^p(\Ha)},
\]
for any  sequence $(f_j)_{j\geq 1}$  of measurable functions defined on $\mathbb{H}^n$ such that $\left(\sum_{j=1}^\infty|f_j(\cdot)|^q\right)^{\frac1q}\in L^p(\mathbb{H}^n).$
\end{theorem}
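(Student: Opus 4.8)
The plan is to dominate $\mathcal M$ pointwise by the composition of two operators each of which carries a dimension-free vector-valued bound: the one-dimensional Hardy--Littlewood maximal operator $M_{\mathrm{cen}}$ acting only in the central variable of $\Ha$, and the Nevo--Thangavelu spherical maximal operator $\mathfrak M$ --- the supremum over $s>0$ of the averages $f\mapsto f\ast\mu_s$ over the Euclidean spheres $\{(\underline\omega,0):\|\underline\omega\|=s\}\subset\Ha$. The dimension-free vector-valued boundedness of $\mathfrak M$ on $L^p(\Ha;\ell^q)$, and more generally on $L^p(\Ha;X)$ for a UMD lattice $X$, is the genuinely new input; I will establish it separately and here treat it as known, so that the theorem reduces to an elementary slicing argument.

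The reduction uses the stratified structure $\Ha=\C^n\times\R$. By left-invariance $\mathcal M f(x)=\sup_{r>0}|B(0,r)|^{-1}\int_{|z|<r}|f(x\cdot z)|\,dz$, and I would foliate the Kor\'anyi ball $B(0,r)=\{\|\underline z\|^4+\overline z^2<r^4\}$ by the horizontal hyperplanes $\{\overline z=r^2t\}$, $t\in(-1,1)$, whose traces are the Euclidean balls $\{\|y\|<r(1-t^2)^{1/4}\}$ in $\C^n$. This rewrites $M_rf(x)$ as the integral, against the probability measure proportional to $(1-t^2)^{n/2}\,dt$ on $(-1,1)$, of Euclidean ball averages of $y\mapsto|f|(\underline x+y,\ \overline x+r^2t+\tfrac12\underline x^{t}Jy)$ over $\{\|y\|<r(1-t^2)^{1/4}\}$; crucially, in this normalization the volume factor $|B(0,r)|=C_Q r^{2n+2}$ cancels exactly against the Euclidean ball volumes, leaving no dimensional constant. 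Writing each inner Euclidean ball average as an average of the sphere averages over smaller radii dominates the inner term by $\mathfrak M(|f|)$ evaluated at $(\underline x,\overline x+r^2t)$, with constant one. Finally, the dilated densities $\tau\mapsto\frac{c_n}{\tau}(1-(\cdot/\tau)^2)^{n/2}\ind_{\{|\cdot|<\tau\}}$ on the central line are even, radially decreasing and of total mass one, so the radially-decreasing-majorant principle dominates the remaining supremum over $r$ and average over $t$ by $M_{\mathrm{cen}}$ applied to $\mathfrak M(|f|)$, with an absolute constant. The conclusion is the pointwise inequality $\mathcal M f\lesssim M_{\mathrm{cen}}\bigl[\mathfrak M(|f|)\bigr]$.

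Granting the dimension-free vector-valued bound for $\mathfrak M$, Theorem \ref{Main-thm} follows at once: the classical vector-valued Fefferman--Stein inequality for $M_{\mathrm{cen}}$ on $L^p(\R;\ell^q)$ has a constant depending only on $p$ and $q$ and is applied fibrewise over the horizontal variable, so $\|\mathcal M f\|_{L^p(\Ha;\ell^q)}\lesssim_{p,q}\|\mathfrak M(|f|)\|_{L^p(\Ha;\ell^q)}\lesssim_{p,q}\|f\|_{L^p(\Ha;\ell^q)}$ uniformly in $n$, and the same argument with $\ell^q$ replaced by a UMD lattice $X$ yields the lattice-valued generalization. For the finitely many exceptional dimensions $n$ (depending on $p,q$) in which the Nevo--Thangavelu maximal operator need not be bounded on $L^p(\Ha;\ell^q)$ --- recall it is scalar-bounded only for $p>p_0(n)$ with $p_0(n)\to1$ as $n\to\infty$ --- one instead invokes the known dimension-dependent Fefferman--Stein inequality of Grafakos et al.\ for spaces of homogeneous type; since this exceptional set is finite, dimension-freeness is unaffected.

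I expect the entire difficulty to lie in the dimension-free vector-valued estimate for $\mathfrak M$. Even in the scalar $L^2$ case one analyzes the spherical means on $\Ha$ through the group Fourier transform and Laguerre (Bessel-type) functions, and must observe that for large $n$ the associated radial multipliers decay super-exponentially once the frequency exceeds a dimensional threshold, so that the total variation of the relevant profiles --- hence the $L^2$ square-function bounds --- remains bounded uniformly in $n$; interpolation with the trivial $L^\infty$ bound then propagates this to $p\ge2$, and the vector-valued upgrade in addition requires dimension-free weighted $L^p$ estimates for $\mathfrak M$ together with a Rubio de Francia extrapolation / Bourgain--Rubio de Francia lattice-transference argument performed without reintroducing dimensional dependence. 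A comparatively routine point is to verify the slicing inequality of the second paragraph uniformly over all scales $r>0$, which follows from the dilation homogeneity of $\Ha$.
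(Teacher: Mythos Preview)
Your slicing argument---the pointwise domination $\mathcal M f\lesssim M_{\mathrm{cen}}\bigl[\mathfrak M(|f|)\bigr]$ obtained by foliating the Kor\'anyi ball over the central variable---is correct and is exactly the paper's Lemma~\ref{hlmax-ptest}. The treatment of finitely many small dimensions via the Grafakos--Liu--Yang inequality also matches the paper.

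The genuine gap is in what you assume about $\mathfrak M$. You treat as known a \emph{dimension-free} vector-valued bound for the Nevo--Thangavelu maximal operator, and sketch a proof via the group Fourier transform and Laguerre-type asymptotics. The paper does not prove this and explicitly flags your proposed route as problematic: because the Heisenberg Fourier transform is operator-valued, it is unclear how to extract the necessary decay from a spectral decomposition of the sublaplacian, and the authors abandon that approach. What the paper actually establishes (Theorem~\ref{Sph-Main-Heisenber}) is only a \emph{dimension-dependent} bound $\|M_S\|_{L^p(\ell^q)\to L^p(\ell^q)}\le C(n,p,q)$ for $\tfrac{Q}{Q-2}<p,q<\tfrac{Q}{2}$, obtained via the M\"uller--Seeger oscillatory-integral decomposition of $\mu$ on $\R^{2n+1}$ and complex interpolation between an $L^2(\ell^2)$ estimate and a crude $2^{2k}$-loss $L^r(\ell^s)$ estimate.

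Dimension-freeness then enters through a step you omit entirely: a Stein--Str\"omberg--type dimension reduction. Viewing $\Ha=\mathbb H^{m}\times\R^{2(n-m)}$ and averaging over $U(n)$, the paper dominates $M_S$ on $\Ha$ by rotated copies of the spherical maximal operator on the \emph{fixed} group $\mathbb H^{n_0}$, with $n_0=\lfloor\max\{\tfrac{1}{p-1},\tfrac{1}{q-1},p-1,q-1\}\rfloor+1$ depending only on $p,q$. Applying Theorem~\ref{Sph-Main-Heisenber} on $\mathbb H^{n_0}$ gives a constant $C(n_0,p,q)=C(p,q)$, and this is what makes the final bound independent of $n$. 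Without this rotation argument your scheme does not yield a dimension-free constant, because the $M_S$ bound you would be feeding in still carries $C(n,p,q)$; and your alternative---proving a genuinely dimension-free bound for $\mathfrak M$ directly---is neither carried out nor, as far as the paper indicates, known to be feasible by the Fourier/Laguerre method you propose.
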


Next, following the approach in \cite{DK}, we extend the above result by considering more general UMD Banach lattices in place of $\ell^q$. To proceed, we first introduce some necessary notations. In what follows, $Z$ will denote a Banach lattice of measurable functions in a $\sigma-$finite measure space $(\Omega, d \omega),$ and $|\cdot|$ will denote the absolute value in $Z,$ i.e., $|z|:=\sup\{z, -z\},$ which canonically extend as $|z|(\omega)=|z(\omega)|, \,\omega\in \Omega.$  Also, in the sequel $L^p(\Ha, Z)$ will denote the Bochner-space of measurable functions $f:\Ha\to Z$ satisfying $\int_{\Ha}\|f(x)\|_{Z}\, dx<\infty.$ A Banach lattice $Z$ is a UMD-lattice if the Hilbert transform 
$$Hf(s)=\text{p.v.}\,\int_{\mathbb{R}}\frac{f(t)}{s-t}\, dt,\quad\,\, f\in \mathcal{S}(\mathbb{R}),$$
extends boundedly from $L^p(\mathbb{R}, Z)$ to itself for $1<p<\infty.$ Further, $L^p(\Ha)\otimes Z$ denotes the dense subspace
\begin{align}
\big\{f(x, \omega):=\sum_{j=1}^{N}f_{j}(x)\, z_{j}(\omega): f_{j}\in L^p(\Ha), z_{j}\in Z, N\in \mathbb{N}\big\}
\end{align}
of $L^p(\Ha, Z).$ For $f\in L^p(\Ha)\otimes Z,$ the UMD lattice valued Hardy-Littlewood maximal operator on the Heisenberg group is defined as
\begin{align}
&\mathcal{M}(f)(x, \omega)=\sup_{r>0}\frac{1}{|B(x, r)|}\int_{B(x, r)} |f(y, \omega)|\, dy.
\end{align}
Now we state our result.
\begin{theorem}
\label{main-thm2}
Let $1<p<\infty$ and $Z=Z(\Omega,\, d\omega)$ be a UMD lattice. Then there exists a constant $C(p, Z),$ independent of $n,$ such that
$$\bigg\| \big\|\mathcal{M}(f)(\cdot, z)(x)\big\|_{Z}\bigg\|_{L^p(\Ha)}\leq C(p, Z)\, \|f\|_{L^p(\Ha,\, Z)}.$$
\end{theorem}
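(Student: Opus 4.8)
The plan is to deduce Theorem \ref{main-thm2} from a pointwise domination of the Kor\'anyi-ball maximal function $\mathcal{M}$ on $\Ha$ by a composition of two operators, each bounded on $L^{p}(\Ha,Z)$ with a constant \emph{independent of $n$}: the vector-valued Nevo--Thangavelu spherical maximal operator $\mathcal{M}^{S}$, and the one-dimensional Hardy--Littlewood maximal operator $\mathcal{M}^{\mathrm{cent}}$ acting in the central variable. This is the same scheme that yields Theorem \ref{Main-thm}, now carried out with the UMD lattice $Z$ in place of $\ell^{q}$; the composition step does not see the choice of lattice, so the only genuinely new ingredient is a dimension-free $L^{p}(\Ha,Z)$ bound for $\mathcal{M}^{S}$, which we establish separately. (A black-box transference from the $\ell^{q}$-case is not available: boundedness of $\mathcal{M}$ on every $L^{p}(\Ha;\ell^{q})$ does not force boundedness on $L^{p}(\Ha,Z)$ for UMD lattices $Z$.)

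For the reduction, write $x=(\underline{x},\overline{x})$. Substituting $y=x\cdot(v,u)$ and using left-invariance of Haar measure, the average of $f$ over $B(x,r)=x\cdot B(0,r)$ equals $\frac{1}{|B(0,r)|}\iint_{\|v\|^{4}+u^{2}<r^{4}} f\bigl((\underline{x},\overline{x}+u)\cdot(v,0)\bigr)\,\mathrm{d}v\,\mathrm{d}u$, so the group law has been absorbed into the evaluation point while the domain is Euclidean. Slicing in $u$ and applying the Euclidean coarea formula in the horizontal variable $v\in\mathbb{R}^{2n}$ writes the inner integral as a superposition over $\rho\in\bigl(0,(r^{4}-u^{2})^{1/4}\bigr)$ of the Heisenberg spherical means $A^{S}_{\rho}f(\underline{x},\overline{x}+u)=\frac{1}{|S^{2n-1}|}\int_{S^{2n-1}}f\bigl((\underline{x},\overline{x}+u)\cdot(\rho\omega,0)\bigr)\,\mathrm{d}\omega$, whose supremum in $\rho$ is $\mathcal{M}^{S}f$; the remaining average in $u$ is convolution in the central variable against $w_{r}(u)=r^{-2}w_{1}(u/r^{2})$, where $w_{1}$ is a fixed even, radially decreasing profile with $\|w_{1}\|_{1}=1$. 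Dominating $|A^{S}_{\rho}f|$ by $\mathcal{M}^{S}f$ and using the standard majorization by an approximate identity gives the pointwise bound $\mathcal{M}f(x)\le\mathcal{M}^{\mathrm{cent}}\bigl(\mathcal{M}^{S}f\bigr)(x)$ with absolute constant. Since $\mathcal{M}^{\mathrm{cent}}$ acts only in the one-dimensional central slot, $\|\mathcal{M}^{\mathrm{cent}}\|_{L^{p}(\Ha,Z)\to L^{p}(\Ha,Z)}$ equals the norm of the one-dimensional lattice-valued maximal operator on $L^{p}(\mathbb{R},Z)$, hence is a constant $C(p,Z)$ independent of $n$. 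Together with the dimension-free bound for $\mathcal{M}^{S}$ this proves the theorem for all $n$ past the threshold $n_{0}(p)$ at which $p>\tfrac{2n}{2n-1}$; the remaining finitely many dimensions are handled by the vector-valued Hardy--Littlewood inequality on spaces of homogeneous type of Grafakos et al.\ \cite{Gra-Scand}, and the final constant is the maximum over all cases. (For $1<p<2$ one must also check that the $\mathcal{M}^{S}$-constant stays uniform as $n$ decreases to $n_{0}(p)$, i.e.\ near the edge of the admissible exponent range; this is where an interpolation with an endpoint estimate, in the spirit of Zienkiewicz's scalar treatment, enters.)

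The whole proof thus rests on the dimension-free, UMD-lattice-valued Nevo--Thangavelu spherical maximal theorem, established separately, and this is where the real difficulty lies. Two points must be dealt with. First, the $L^{2}$-smoothing and Fourier-decay estimates for the means $A^{S}_{\rho}$ and their frequency-localized pieces must be obtained with \emph{explicit, at most polynomial} dependence on $n$ --- quantifying the curvature of the $(2n-1)$-sphere --- so that interpolation with the trivial $L^{\infty}$ bound absorbs the growth and yields $L^{p}$ estimates uniform in $n$. Second, these scalar (or $\ell^{q}$) estimates must be lifted to a general UMD lattice $Z$: this cannot be done by $\ell^{q}$-duality, and instead one re-runs the square-function and maximal arguments with Rademacher averages and the UMD inequality (equivalently, the boundedness of the Hilbert transform on $L^{p}(\mathbb{R};L^{p}(\Ha,Z))$) replacing the $\ell^{q}$-triangle inequality, taking care that no dimensional constant re-enters. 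A minor but necessary point, already visible in the reduction above, is that the substitution and the coarea slicing must respect the group law exactly, so that the inner averages are genuinely the Heisenberg spherical means rather than merely comparable to them.
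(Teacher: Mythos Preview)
Your reduction scheme---the pointwise domination $\mathcal{M}f\le M_{\mathbb R}(M_S f)$ via slicing in the central variable, followed by the one-dimensional lattice-valued maximal inequality---matches the paper's Lemma~\ref{hlmax-ptest} exactly, and the rotation-averaging descent to a lower-dimensional $\mathbb H^{m}$ is also the right idea. The overall architecture is correct.

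There is, however, a genuine gap at both places where you pass from $\ell^q$ to a general UMD lattice $Z$. First, for the finitely many small dimensions you invoke Grafakos--Liu--Yang \cite{Gra-Scand}, but that result is an $\ell^q$-valued Fefferman--Stein inequality, not a UMD-lattice-valued one; you yourself note a few lines earlier that $\ell^q$-boundedness does not transfer to general $Z$. The paper closes this gap differently: it proves a (dimension-\emph{dependent}) bound $\|\mathcal M f\|_{L^p(\Ha;Z)}\le C(n,p,Z)\|f\|_{L^p(\Ha;Z)}$ by dominating $\mathcal M$ pointwise by the heat maximal function and then invoking Xu's theorem \cite{X} on $Z$-valued maximal inequalities for analytic contraction semigroups (this is Proposition~\ref{withdimumdbdd}). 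Second, your mechanism for the $Z$-valued spherical maximal bound---``re-run the square-function arguments with Rademacher averages and the UMD inequality''---is too vague to be a proof, and it is not what the paper does. The paper uses the structural fact that every UMD lattice satisfies $Z=[X,L^2(\Omega)]_\theta$ for some UMD lattice $X$ and some $\theta\in(0,1)$; one then interpolates the M\"uller--Seeger $L^2$ decay (Proposition~\ref{PropMS}), which lifts trivially to $L^2(\Ha;L^2(\Omega))$, against the crude bound $M_{k,l}f\lesssim 2^{2k}\mathcal M f$ in $L^r(\Ha;X)$, the latter supplied again by Proposition~\ref{withdimumdbdd}. Note that this only yields a \emph{dimension-dependent} bound for $M_S$ on $L^p(\Ha;Z)$, valid once $n$ exceeds a threshold $n_0=n_0(p,Z)$ determined by $\theta$; dimension-freeness then comes from the rotation trick with $m=n_0$, not from a direct uniform estimate on $M_S$.

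In short, the missing ingredient is Proposition~\ref{withdimumdbdd} (heat semigroup analyticity plus Xu's maximal theorem), which the paper uses twice: once to handle the small dimensions, and once as the ``bad'' endpoint in the complex interpolation that yields the $Z$-valued Nevo--Thangavelu bound. Your threshold $n_0$ should also depend on $Z$ (through $\theta$), not just on $p$.
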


We now outline the key ideas underlying the proofs, which will motivate the development of another vector-valued estimate later in this article. To set the stage, we first return to the Euclidean setting, where the approach for obtaining dimension-free estimates for the scalar-valued maximal function originates from the seminal work of Stein and Strömberg \cite{SS}. Their method is based on two fundamental steps. First, we decompose $\mathbb{R}^n$ as $\mathbb{R}^{n-k}\times \mathbb{R}^k,$ and represent any point $y\in \mathbb{R}^n$ as $y=(y_1, y_2)\in \mathbb{R}^{n-k}\times \mathbb{R}^k.$  The crucial initial step is to express the Hardy-Littlewood maximal function in terms of averages over lower-dimensional weighted maximal functions, denoted schematically as 
\begin{align}
\label{SS1}
M_{\mathbb{R}^n}f(x)\leq \int_{\mathcal{O}(n)} M_{k}^{A}f(x)\, dA,   
\end{align}
where $dA$ denotes the normalized Haar measure on the orthonormal group of matrices $\mathcal{O}(n),$ and
$$M_{k}^{A}f(x):=\sup_{r>0}\frac{\int_{|y_1|<r} f(x-A(y_{1}, 0))\,|y_1|^{k} \, dy_{1}}{\int_{|y_1|<r} |y_1|^{k}\, dy_{1}},$$
for all $1\leq k\leq n.$ Secondly, Stein and Strömberg \cite{SS} established the following estimate:
\begin{align}
\label{SS2}
\|M_{k}^{A}f\|_{L^p(\mathbb{R}^n)}\leq C(n-k, p)\|f\|_{L^p(\mathbb{R}^n)}
\end{align}
 when $n\geq k+ 3$ and $p>\frac{n-k}{n-k-1}.$ The proof of this result relies critically on the boundedness of Stein's spherical maximal function, established in \cite{SteinSph}.  The dimension-free estimates follow directly from these two steps. Specifically, for $1 < p \leq \infty$ with $n \leq \frac{p}{p-1}$, one can employ standard techniques to establish the boundedness of $M_{\R^n}$. For the more intricate case when $n > \frac{p}{p-1}$, we decompose $\mathbb{R}^n = \mathbb{R}^{n-k} \times \mathbb{R}^k$, where  
 \[
n-k := \left\lfloor \max\left\{2, \frac{p}{p-1}\right\} \right\rfloor + 1.  
\]  
 The dimension-free estimate then follows by applying \eqref{SS1} and \eqref{SS2}, taking into account that the constant $C(n-k, p)$ simplifies to depend only on $p$.

 Following this approach, the authors in \cite{DK} extended the method to the vector-valued setting. A crucial step in their analysis was establishing vector-valued $L^p$ estimates for the spherical maximal function on $\R^n.$ Then, in the spirit of Rubio de Francia's approach, they dominated the maximal function by a series of frequency-localized components. By applying explicit estimates of the Bessel function, they estimated these components, and through complex interpolation, they obtained the desired result.

To establish the main results of this article, we follow the aforementioned approach. As the reader might expect, a key role will be played by the Heisenberg counterpart of Stein's spherical maximal function, known as the Nevo--Thangavelu spherical maximal function. We recall its definition below: Let $\mu$ be the normalised surface measure on the horizontal sphere $\mathbb{S}^{2n-1} \times \{0\} \subseteq \Ha$. For $r>0,$ the dilate of $\mu,$ $\mu_{r}$ is defined as $\langle f, \mu_{r}\rangle=\int f(rx, 0)\, d\mu,$ for Schwartz class functions. The spherical means taken over horizontal spheres are defined by  
\begin{equation}\label{eq: horizontal spherical av}
    A_{r} f(x)=f*\mu_{r}(x) = \int_{\mathbb{S}^{2n-1}} f\big(\underline{x}-r \underline{u}, \overline{x}- {\textstyle\frac{r}{2} }   \, \underline{x}^t J \underline{u}\big) \,d\mu(\underline{u}), 
\end{equation}
for $r>0$, and $x\in \Ha$. The Nevo--Thangavelu spherical maximal function is defined (initially on Schwartz class functions) as $$M_{S}f(x)=\sup_{r>0}|A_{r}f(x)|\quad\quad(x\in \Ha).$$

As the name suggests, the Nevo--Thangavelu spherical maximal function was introduced by Nevo and Thangavelu in \cite{NeT}. They proved that $M_{S}$ is bounded on $L^p(\mathbb{H}^n)$ for $p > \frac{2n-1}{2n-2}$.
Subsequently, it was independently proved by Narayanan and Thangavelu in \cite{NaT}, and by Müller and Seeger in \cite{MS}, that $M_{S}$ is bounded on $L^p(\mathbb{H}^n)$ if and only if $p > \frac{2n}{2n-1}$. In recent years, the operator $M_{S}$ has been explored in various contexts by several authors; see \cite{BHRT, Roos} and the references therein for further details.

We remark in passing that, in the context of $\mathbb{H}^n$, there is another type of spherical maximal function associated with averages over spheres defined in terms of the Korányi norm, first studied by Cowling \cite{Cowling}. Notably, Kor\'anyi spheres are codimension-one surfaces, whereas horizontal spheres are codimension-two in $\mathbb{H}^n$. This maximal function has also attracted considerable attention in various contexts; see, for instance, \cite{GT, Raj}. However, unlike the Korányi spherical means,  the Nevo–Thangavelu spherical means are linked to orbital integrals arising from the action of $U(n)$ on $\mathbb{H}^n$, a connection that is crucial for our proofs. This is the reason why the Kor\'anyi spherical maximal function is not a useful tool in our analysis.

 Returning to $M_S$, we establish the following vector-valued estimates for the Nevo–Thangavelu maximal operator $M_S$:

\begin{theorem}
\label{Sph-Main-Heisenber}
Let $n>1$, and $(f_j)_{j\geq1}$ be a sequence of measurable functions defined on $\Ha$. If $\bigl(\sum_{j=1}^{\infty}|f_j(\cdot)|^q\bigr)^\frac{1}{q} \in L^p(\Ha)$, then 
\[
\biggl\|\Bigl(\sum_{j=1}^{\infty}|M_{S}f_j(\cdot)|^q\Bigr)^\frac{1}{q}\biggr\|_{L^p(\Ha)}\leq C(n, p, q)\biggl\|\Bigl(\sum_{j=1}^{\infty}|f_j(\cdot)|^q\Bigr)^\frac{1}{q}\biggr\|_{L^p(\Ha)},
\]
provided $\frac{Q}{Q-2}<p, q<\frac{Q}{2},$ where $Q=2n+2$ is the homogeneous dimension of $\Ha.$
\end{theorem}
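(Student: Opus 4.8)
The plan is to obtain the vector-valued bound for $M_S$ by combining two ingredients: the scalar $L^p$-theory for $M_S$ together with its $U(n)$-equivariance structure, and the machinery of Rubio de Francia / Fourier-multiplier vector-valued extrapolation, exactly mirroring the argument of Deleaval--Kriegler \cite{DK} in the Euclidean case but now adapted to the group Fourier transform on $\Ha$. The first step is to exploit the fact, already emphasized in the introduction, that the Nevo--Thangavelu means $A_r$ arise from orbital integrals for the action of $U(n)$ on $\mathbb{H}^n$; concretely, $A_r f = f * \mu_r$ is a convolution with a $U(n)$-invariant, compactly supported measure, hence $M_S$ is a radial (in the appropriate sense) Fourier multiplier operator that can be analyzed on the Heisenberg fan / using the special Hermite expansion. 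Using the partial Fourier transform in the central variable and the decomposition into Laguerre/special Hermite pieces, one writes $A_r$ via its action on each $\lambda$-twisted convolution and isolates a family of smooth frequency-localized operators $A_r^{(k)}$, $k \ge 0$, with $A_r = \sum_{k} A_r^{(k)}$ and $M_S^{(k)} := \sup_r |A_r^{(k)}|$.

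Next, following Rubio de Francia's method and the presentation in \cite{DK}, the goal is to prove, for each dyadic block, a square-function / single-scale estimate of the form $\| M_S^{(k)} f \|_{L^p(\Ha)} \lesssim 2^{-k\delta(p)} \|f\|_{L^p(\Ha)}$ on a range of $p$ near $2$, together with the $L^2$-decay coming from the stationary-phase / Bessel-type estimates for the spherical measure $\widehat{\mu}$ (here the curvature of the sphere $\mathbb{S}^{2n-1}$ enters, giving decay $|\widehat{\mu_r}(\xi)| \lesssim |r\xi|^{-(2n-1)/2}$ in the Euclidean factor, tensored with the oscillatory behavior in the central frequency). Interpolating the $L^2$ bound (with geometric decay in $k$) against a crude $L^p$ bound for $p$ close to $1$ — which itself follows from the scalar result of Nevo--Thangavelu / Müller--Seeger, sharp for $p > \frac{2n}{2n-1}$, or more conveniently from a Sobolev embedding applied to the lifted local smoothing estimate — yields a summable geometric bound $\sum_k 2^{-k\delta} < \infty$ on the stated range $\frac{Q}{Q-2} < p < \frac{Q}{2}$. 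The scalar estimates then upgrade to $\ell^q$-valued (indeed UMD-lattice-valued) ones for the individual blocks by the standard transference/Fourier-multiplier principle for UMD-lattice-valued functions (the $R$-boundedness of the block multipliers), and summing in $k$ gives Theorem \ref{Sph-Main-Heisenber}; the duality $p \leftrightarrow p'$, $q \leftrightarrow q'$ built into the range $(\frac{Q}{Q-2}, \frac{Q}{2})$ is exactly what one expects from such an interpolation/extrapolation scheme.

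The main obstacle, I expect, is establishing the frequency-localized $L^2$ bounds with the correct curvature-driven decay in the noncommutative setting: on $\Ha$ the Fourier transform is operator-valued, the dilation $\mu \mapsto \mu_r$ does not act as a simple rescaling because the sphere sits inside $\mathbb{C}^n$ but the central variable scales quadratically, and the relevant oscillatory integrals are those studied by Müller--Seeger \cite{MS} and Narayanan--Thangavelu \cite{NaT}. One must either quote their sharp fixed-time and local-smoothing estimates for $A_r$ on $\Ha$ in a form uniform enough to extract the $k$-dependence, or redo the stationary-phase analysis on the Heisenberg fan. A secondary technical point is making the Rubio de Francia square-function argument compatible with the $U(n)$-orbit structure — i.e., checking that the smooth cutoffs used to define the blocks $A_r^{(k)}$ remain Fourier multipliers with $R$-bounded symbols so that the UMD-lattice transference applies verbatim — but this is expected to be routine once the scalar block estimates are in hand. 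Finally, one should record that the scalar endpoint input needed is only $L^p$-boundedness of $M_S$ for some $p$ below $2$ (available since $n > 1$, as $\frac{2n}{2n-1} < 2$), so the hypothesis $n>1$ in the statement is precisely what guarantees a nonempty interpolation range.
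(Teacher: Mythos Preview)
Your plan has a genuine gap, and it is precisely the one the paper flags in the introduction: the decomposition you propose---partial Fourier transform in the central variable, special Hermite / Laguerre expansion, and stationary-phase analysis on the Heisenberg fan---is the approach the authors explicitly say they could \emph{not} carry out. The operator-valued Fourier transform does not yield frequency-localized blocks $A_r^{(k)}$ for which one can read off the required $L^p$-decay; you acknowledge this obstacle yourself but do not resolve it, merely writing that one ``must either quote'' the M\"uller--Seeger estimates ``or redo the stationary-phase analysis.'' Neither option is carried out, and the former does not obviously adapt to your spectral blocks. A second gap is your claim that the scalar block estimates ``upgrade to $\ell^q$-valued \ldots\ by the standard transference/Fourier-multiplier principle for UMD-lattice-valued functions'': no such off-the-shelf $R$-boundedness / transference principle is available for group Fourier multipliers on $\Ha$, so this step is not routine.

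The paper's route avoids the group Fourier transform entirely. It uses the M\"uller--Seeger oscillatory-integral decomposition of the surface measure $\mu$ as a distribution on $\R^{2n+1}$, producing kernels $\mu^{k,l}$ and $\widetilde\mu^{k}$ indexed by dyadic scales in the $(\sigma,\tau)$ frequency variables. The $L^2$ bound with decay $\sqrt{k}\,2^{-k(2n-2)/2}$ is quoted directly from \cite{MS}, and Fubini immediately gives the $L^2(\ell^2)$ version. The crucial second ingredient is not a crude scalar $L^p$ bound plus transference, but a \emph{pointwise} domination $M_{k,l}f \lesssim 2^{2k}\,\mathcal{M}f$ obtained by integrating by parts in the oscillatory integral; this instantly gives $L^p(\ell^q)$ bounds for all $1<p,q<\infty$ via the Fefferman--Stein inequality (Theorem~\ref{ThmFS}), with no Fourier-multiplier transference needed. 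Complex interpolation of the linear map $(f_j)\mapsto (t\mapsto f_j*\mu_t^{k,l})$ between $L^2(\ell^2)$ and $L^r(\ell^s)$ then yields geometric decay on the range $\frac{Q}{Q-2}<p,q<\frac{Q}{2}$, and one sums in $k,l$.
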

Theorem~\ref{Sph-Main-Heisenber} is quintessential in the proof of our main result Theorem~\ref{Main-thm}, however, we believe it is also of independent interest. To prove this theorem, the primary challenge when extending the techniques from \cite{DK} to the Heisenberg group is that the Fourier transform becomes operator-valued. Similar to the Euclidean case, the horizontal spherical mean can be interpreted as a radial Fourier multiplier on the Heisenberg group, and it is possible to decompose the associated maximal function into a sum of frequency-localized components using the spectral theory of scaled Hermite operators. However, at this stage, it is unclear how to derive the necessary estimates for these components to obtain the $L^p$-estimate. To circumvent this issue, we move away from the Fourier transform approach based on spectral theory of the sublaplacian. Instead treating the Heisenberg group $\Ha$ as $\R^{2n+1}$, by using a M\"uller-Seeger type decomposition of the spherical mean with oscillatory integrals, we are able to address this difficulty. We provide a detailed explanation of this method in the next section.

Once we have the Theorem~\ref{Sph-Main-Heisenber} at our disposal, the proof of Theorem~\ref{Main-thm} follows from the fact that the Hardy-Littlewood maximal function $\mathcal{M}$ can be dominated by the composition of the Hardy-Littlewood maximal function on the vertical direction together with $M_{S}.$ These steps are elaborated in Section~\ref{Main-Sec}.

Throughout this article, we use the shorthand $C(A)$ to denote constants that depend only on the parameter $A$. These constants may vary from line to line without explicit mention.

\section{Preliminaries}
\label{Prelim}
In this section, we gather the necessary background on the Heisenberg group. General reference for the basics of Heisenberg groups is the monograph of Thangavelu \cite{TH}. See also the book of Folland \cite{Folland}.  However, we shall use slightly different notations for our convenience. 
\subsection{Structure of the Heisenberg group}	Recall that the $(2n+1)$- dimensional Heisenberg group $\mathbb{H}^n:=\mathbb{C}^n\times\mathbb{R}$ is a step two nilpotent Lie group where the Lebesgue measure $dx=d\underline{x}d\overline{x}$ on $\mathbb{C}^n\times\mathbb{R}$ serves as the Haar measure. We have a family of non-isotropic dilations defined by $\delta_{r}(\underline{x}, \overline{x}):=(r \underline{x}, r^2 \overline{x})$, for all $(\underline{x}, \overline{x}) \in \mathbb{H}^n$, for every $r>0$ and the Kor\'anyi norm is homogeneous of degree 1 with respect to this family of dilations, that is, $|\delta_{r}(\underline{x}, \overline{x})|= r \,  |(\underline{x}, \overline{x})|$. The convolution of $f$ with $g$ on $\Ha$ is defined by
\begin{equation*}
   f * g \, (x) = \int_{\Ha}  f(x y^{-1}) g(y) dy, \ \ \ x \in \Ha.
\end{equation*}
    We let $ \mathfrak{h}_n $ stand for the Heisenberg Lie algebra consisting of left invariant vector fields on $ \mathbb{H}^n .$  A  basis for $ \mathfrak{h}_n $ is provided by the $ 2n+1 $ vector fields
	 $$ X_j = \frac{\partial}{\partial{x_j}}+\frac{1}{2} x_{n+j} \frac{\partial}{\partial \overline{x}}, \,\,X_{n+j} = \frac{\partial}{\partial{x_{n+j}}}-\frac{1}{2} x_j \frac{\partial}{\partial \overline{x}}, \,\, j = 1,2,..., n $$
	 and $ T = \frac{\partial}{\partial \overline{x}}.$  These correspond to certain one parameter subgroups of $ \mathbb{H}^n.$ The sublaplacian on $\Ha$ is defined by $\mathcal{L}:=-\sum_{j=1}^{2n}X_j^2 $ which is given explicitly by
	 $$\mathcal{L}=-\Delta_{\C^n}-\frac{1}{4}|\underline{x}|^2\frac{\partial^2}{\partial \overline{x}^2}+\left(\sum_{j=1}^{n}\left(x_j\frac{\partial}{\partial x_{n+j}}-x_{n+j}\frac{\partial}{\partial x_j}\right)\right)\frac{\partial}{\partial \overline{x}}$$ where  $\Delta_{\C^n}$ stands for the Laplacian on $\C^n.$ 
	 This is a sub-elliptic operator and homogeneous of degree $2$ with respect to the parabolic dilation given by $\delta_{r}(\underline{x}, \overline{x})=(r \underline{x}, r^2 \overline{x}).$ The sublaplacian is also invariant under  rotation i.e., $$R_{\sigma}\circ \mathcal{L}=\mathcal{L}\circ R_{\sigma},~\sigma\in U(n),$$
     where $R_\sigma$ is the horizontal rotation by $\sigma$,  defined as $R_\sigma f(x):=f(\sigma \cdot \underline{x}, \bar{x}),~x\in \Ha.$

We close this subsection with the Fefferman-Stein inequality for the Hardy-Littlewood maximal operator $\mathcal{M}$ on  $\Ha.$ 

\begin{theorem}\cite[Theorem 1.2]{Gra-Scand}
\label{ThmFS}
Let $1<p, q<\infty.$ Then there exists a constant $C(n, p, q)$ such that 
\[
\biggl\|\Bigl(\sum_{j=1}^{\infty}|\mathcal{M} f_j(\cdot)|^q\Bigr)^\frac{1}{q}\biggr\|_{L^p(\Ha)}\leq C(n, p, q)\, \biggl\|\Bigl(\sum_{j=1}^{\infty}|f_j(\cdot)|^q\Bigr)^\frac{1}{q}\biggr\|_{L^p(\Ha)}.
\]
 holds true.
    
\end{theorem}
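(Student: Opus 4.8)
\textbf{Proof proposal for Theorem~\ref{ThmFS} (Fefferman--Stein inequality on $\Ha$).}

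The plan is to obtain the vector-valued inequality from the corresponding scalar inequality for $\mathcal{M}$ on $\Ha$ by a linearization/duality argument, exactly in the spirit of Grafakos et al.\ \cite{Gra-Scand}, exploiting that $(\Ha,|\cdot|,dx)$ is a space of homogeneous type. I would first recall the two ingredients that are already available in this setting: (i) the strong $(p,p)$ boundedness of the scalar Hardy--Littlewood maximal function $\mathcal{M}$ for $1<p<\infty$ and its weak $(1,1)$ bound, which hold on any space of homogeneous type via the Vitali-type covering lemma (Stein \cite{SteinBook}); and (ii) the Calder\'on--Zygmund decomposition and the resulting theory of (vector-valued) singular integral operators on spaces of homogeneous type. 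The strategy of \cite{Gra-Scand} is to realize $\mathcal{M}$ (or rather a pointwise comparable sublinear object) as the norm of a suitable vector-valued \emph{linear} operator and then apply the vector-valued Calder\'on--Zygmund theory. Concretely, for each $x\in\Ha$ choose (measurably) a radius $r(x)>0$ nearly attaining the supremum in $\mathcal{M}f(x)$; this reduces matters to the linear averaging operator $T_{r(\cdot)}f(x)=\frac{1}{|B(x,r(x))|}\int_{B(x,r(x))}f(y)\,dy$, whose kernel $K(x,y)=|B(x,r(x))|^{-1}\ind_{B(x,r(x))}(y)$ satisfies the size bound $|K(x,y)|\le C\, |B(x,d(x,y))|^{-1}$ uniformly in the choice of $r(\cdot)$.

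Next I would set up the vector-valued extension. Let $\ell^q$ play the role of the Banach space; one checks that the kernel $K$, viewed as acting diagonally on $\ell^q$-valued functions, inherits the required size estimate with the same constant, and that the operator norm of $T_{r(\cdot)}$ on $L^2(\Ha)$ is bounded by the scalar $L^2\to L^2$ bound for $\mathcal{M}$ (here one uses that $q=2$ gives a genuine Hilbert-space-valued operator, or more robustly that the scalar weak $(1,1)$ and $L^2$ bounds already suffice to feed the Calder\'on--Zygmund machinery). Applying the vector-valued Calder\'on--Zygmund theorem on the homogeneous space $\Ha$ then yields
\[
\Bigl\| \bigl(\textstyle\sum_j |T_{r_j(\cdot)}f_j|^q\bigr)^{1/q}\Bigr\|_{L^p(\Ha)}\le C(n,p,q)\,\Bigl\|\bigl(\textstyle\sum_j|f_j|^q\bigr)^{1/q}\Bigr\|_{L^p(\Ha)}
\]
uniformly over all measurable selections $r_j(\cdot)$, and taking the supremum over such selections (using that one may linearize each $\mathcal{M}f_j$ separately while keeping the $\ell^q$ structure intact) gives the claimed bound. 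Alternatively, and perhaps more cleanly, I would invoke directly Theorem~1.2 of \cite{Gra-Scand}, which is stated precisely for spaces of homogeneous type; since $\Ha$ equipped with the Kor\'anyi metric and Haar = Lebesgue measure is a space of homogeneous type (with $|B(x,r)|=C_Q r^Q$), the theorem applies verbatim, and the constant $C(n,p,q)$ is whatever that general theorem produces.

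The main obstacle, such as it is, is purely bookkeeping: one must check that the linearization is compatible with the $\ell^q$-valued formulation --- i.e.\ that the supremum over measurable radius-selections can be pulled outside the $\ell^q$-norm and outside the $L^p$-norm without loss --- and that the singular-integral kernel estimates genuinely hold uniformly in the selection on the homogeneous space $\Ha$ (the Korányi balls are not ``round,'' but the doubling property and the quasi-metric triangle inequality are all that is used). Since this theorem is only quoted here as an auxiliary input and \emph{no} dimension-independence is claimed, the cleanest route is simply to cite \cite[Theorem 1.2]{Gra-Scand} directly; the sketch above is the argument underlying it, included only for completeness.
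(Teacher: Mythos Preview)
Your proposal is correct and aligns with the paper: Theorem~\ref{ThmFS} is stated here purely as a citation of \cite[Theorem~1.2]{Gra-Scand} with no proof given, since $(\Ha,|\cdot|,dx)$ is a space of homogeneous type and the result applies verbatim. Your sketch of the linearization/vector-valued Calder\'on--Zygmund argument underlying that reference is accurate and more detailed than what the paper itself provides.
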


Next, considering the Heisenberg group $\mathbb{H}^n$ as $\mathbb{R}^{2n+1}$, we present a decomposition of the horizontal spherical means using oscillatory integrals. This decomposition, originally introduced by M\"uller and Seeger \cite{MS}, provides a refined analytical framework for studying the behavior of the associated maximal function.
\subsection{M\"uller-Seeger decomposition of spherical means}
We  consider the spherical means of  a function $f$ on $\mathbb{H}^n$ defined by 
\begin{equation}
	\label{eq:defin}
	f\ast \mu_r(x)=\int_{S^{2n-1}}f\Big(\underline{x}-r\,\underline{u},\overline{x}-\frac{r}2 \, \underline{x}^t J \underline{u}\Big)\,d\mu(\underline{u}) \quad\quad (x\in \Ha)
\end{equation}
where $\mu$ is the normalised surface measure on the complex sphere $S^{2n-1}\times \{0\}$ in $\mathbb{H}^n$.
Parameterize the sphere $S^{2n-1}$ near the north pole $e_{2n},$ and the parameterization is $(\omega', \phi(\omega'))$ where $\omega'\in B^{2n-1}(\vec{0}, c_{n})$ and the function $\phi(\omega')=\sqrt{1-|\omega'|^2}.$ Therefore, using this parametrization, the measure $\mu$ can be thought of as the pairing of the distribution  \[\gamma_1(x') \gamma_2(x_{2n}, \bar{x})  \delta(x_{2n}-\phi(x'), \bar{x}),\] where 
$\gamma_{1}, \gamma_{1}$ are infinitely smooth functions supported in small balls $B^{2n-1}(0, \rho)$ and $B^{2}(e, \rho^2),$ respectively, where $e=(1, 0)\in \mathbb{R}^{2},$ and $\delta$ is the Dirac measure  in $\mathbb{R}^{2}.$ Moreover, $\rho$ is chosen sufficiently small such that we have $g(0)=1, \nabla g(0)=1, D^2g(0)=I_{2n-1}, g^{(3)}(0)=0.$

Using the Fourier inversion formula for Dirac measures, we can express the measure $\mu$ as follows:
\begin{equation}
    \label{diracFT}
    \mu(x)= \gamma_1(x') \gamma_2(x_{2n}, \bar{x})
 \iint 
e^{i\big(\sigma(x_{2n}-\phi(x'))+\tau \cdot \bar{x}\big)} 
 d\sigma d\tau.
\end{equation}
The integral is understood in the sense of distributions.
To simplify the analysis, we decompose the integral by introducing dyadic decompositions, as per the notations established in \cite{MS}. 
Let $\zeta_0\in C^\infty_0(\R)$ be an even cut-off  so that
$\zeta_0(s)=1$ if $|s|\le 1/2$ and vanishes outside $(-1,1)$.
We let  $\zeta_1(s):=\zeta_0(s/2)-\zeta_1(s).$ To implement dyadic decompositions in $(\sigma, \tau)$ and subsequently in $\sigma$ when $|\sigma| < |\tau|$, we introduce the following partition of unity:
$$\beta_0+\sum_{k\ge 1}\big(\beta_{k,0}+\sum_{1\le l<k/3}\beta_{k,l}
+\widetilde \beta_{k}\big)=1,$$
where $\beta_{0}(\sigma,\tau)=\zeta_0(\sqrt{\sigma^2+|\tau|^2})$, and for $k\geq 1,~1\leq l<k/3$
\begin{align*}
\beta_{k,0}(\sigma,\tau)&=\zeta_1(2^{-k}\sqrt{\sigma^2+|\tau|^2})(1-\zeta_0(2^{-k}\sigma))
\\
\beta_{k,l}(\sigma,\tau)&=\zeta_1(2^{-k}\sqrt{\sigma^2+|\tau|^2})\zeta_1(2^{l-k}\sigma)
\\
\widetilde \beta_{k}(\sigma,\tau)&=\zeta_1(2^{-k}\sqrt{\sigma^2+|\tau|^2})
\zeta_0(2^{[k/3]-k-1}\sigma).
\end{align*}

Note that  for $k>0$ the function $\beta_{k,0} $ is supported
where $|\sigma| \approx 2^k$ and $|\tau|\lesssim 2^k$,
$\beta_{k,l} $ is supported
where $|\tau|\approx 2^k$ and $|\sigma|\approx 2^{k-l}$ and
$\widetilde \beta_k$ is supported where
$|\tau|\approx 2^k$ and $|\sigma|\lesssim  2^{2k/3}$. By applying this partition of unity to the integral in \eqref{diracFT}, we obtain 
$$\mu=
\mu^0+\sum_{k\ge 1}\big(\mu^{k,0}+\sum_{1\le l<k/3} \mu^{k,l}
+ \widetilde \mu^{k}\big)$$
where for $k\geq 1$ 
\begin{align*}
\mu^{0}(x)&= \gamma_1(x') \gamma_2(x_{2n}, \bar{x})
\iint 
e^{i\big(\sigma(x_{2n}-\phi(x'))+\tau \cdot \bar{x}\big)}  
\beta_0(\sigma,\tau)d\sigma d\tau,
\\
\mu^{k,l}(x)
&= \gamma_1(x') \gamma_2(x_{2n}, \bar{x})
\iint e^{i\big(\sigma(x_{2n}-\phi(x'))+\tau \cdot \bar{x}\big)} 
\beta_{k,l}(\sigma,\tau)d\sigma d\tau,\quad (0\le l<k/3)
\\
\widetilde \mu^{k}(x)&= \gamma_1(x') \gamma_2(x_{2n}, \bar{x}) \iint
e^{i\big(\sigma(x_{2n}-\phi(x'))+\tau \cdot \bar{x}\big)}   
\widetilde \beta_k(\sigma,\tau)d\sigma d\tau;
\end{align*}
Now  for $t>0$ we define the  $t$-dilate of any kernel $K$ as $$K_t(x)=t^{-Q} K(t^{-1}\underline{x}, t^{-2} \bar{x})\quad\quad (x=(\underline{x}, \bar{x})\in \Ha).$$ For $k\geq1,~ 0\leq l<k/3$, we consider the following maximal functions: 
\begin{align*}
    M_0f:=\sup_{t>0} |f* \mu^0_{t}|~,~ M_{k,l}f:=\sup_{t>0} |f* \mu^{k,l}_{t}|~, \widetilde{M}_{k}f:=\sup_{t>0}|f\ast \widetilde \mu^{k}_t|
\end{align*}
Therefore, it is easy to see that 
\begin{equation}
    \label{decomposeM}
    M_Sf\leq M_0f+\sum_{k\ge 1}\big(M_{k,0}f+\sum_{1\le l<k/3} M_{k,l}f
+ \widetilde M_kf\big).
\end{equation}
Note that $\mu^0$ is a bounded, compactly supported function whence it follows that 
the maximal function $M_{0}$ is pointwise dominated by the Hardy-Littlewood maximal function $\mathcal{M}$ and therefore it satisfies all the required estimates. Therefore, we need to prove vector-valued estimates for the operators $M_{k, l}$ and $\widetilde{M_{k}}$. We now record the following $L^2$-estimates of these maximal functions proved in \cite[Proposition 2.1]{MS}.
\begin{prop}
\label{PropMS}
 For $k\geq 1,~0\le l<k/3$ 
 \begin{align}
     \big\|M_{k,l}f\big\|_2\lesssim \sqrt{k}\, 2^{-k(2n-2)/2}\|f\|_2,~
\big\|\widetilde M_{k}f\big\|_2 
\lesssim \sqrt{k}\, 2^{-k(2n-2)/2} \|f\|_2.
 \end{align}  
\end{prop}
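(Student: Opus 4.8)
The plan is to deduce the $L^{2}$ maximal bound from a pair of $L^{2}$ square‑function estimates, and to obtain the latter by passing to the partial Fourier transform in the central variable, where the group convolution becomes a family of twisted convolutions amenable to stationary phase.

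\textbf{Step 1 (maximal function to square function).} Fix $\nu\in\{\mu^{k,l},\widetilde\mu^{k}\}$ and write $g_t(x)=f\ast\nu_t(x)$ for $f$ in the Schwartz class. Since $g_t(x)\to0$ as $t\to\infty$, the fundamental theorem of calculus gives $|g_{t_0}(x)|^2=-\int_{t_0}^{\infty}2\,\Real\big(\overline{g_t(x)}\,\partial_t g_t(x)\big)\,dt$, whence by Cauchy--Schwarz
\[
\sup_{t>0}|f\ast\nu_t(x)|^2\le 2\Big(\int_0^\infty|f\ast\nu_t(x)|^2\tfrac{dt}{t}\Big)^{1/2}\Big(\int_0^\infty|t\partial_t(f\ast\nu_t)(x)|^2\tfrac{dt}{t}\Big)^{1/2}.
\]
Integrating in $x$ and applying Cauchy--Schwarz once more reduces the claim to bounding the two square functions $Sf=(\int_0^\infty|f\ast\nu_t|^2\frac{dt}{t})^{1/2}$ and $Df=(\int_0^\infty|t\partial_t(f\ast\nu_t)|^2\frac{dt}{t})^{1/2}$; if $\|Sf\|_2\lesssim A_k\|f\|_2$ and $\|Df\|_2\lesssim B_k\|f\|_2$, the maximal bound is $\lesssim (A_kB_k)^{1/2}\|f\|_2$.

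\textbf{Step 2 (square functions via the central frequency).} I would take the Fourier transform in the central variable $\bar x\mapsto\tau$. Because $\nu$ is supported on the horizontal slice $\mathbb{C}^n\times\{0\}$, the group convolution $f\ast\nu_t$ becomes, for each frozen $\tau$, a twisted convolution on $\mathbb{R}^{2n}$ with twisting phase $e^{-i\tau\frac{t}{2}\underline{x}^{t}J\underline{u}}$; by Plancherel in $\bar x$ the square‑function $L^2$ norms split as integrals over $\tau$ of $L^2(\mathbb{R}^{2n})$ operator norms, and the whole matter is reduced to a pointwise bound on the symbol of these oscillatory integral operators. The relevant phase is the one in \eqref{diracFT}, namely $\sigma(x_{2n}-\phi(x'))+\tau\cdot\bar x$ together with the symplectic twisting term; its nondegenerate directions are governed by the second fundamental form of $S^{2n-1}$, so stationary phase yields symbol decay of order $2^{-k(2n-1)/2}$ on the frequency shell selected by $\beta_{k,l}$ (resp. $\widetilde\beta_k$).

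\textbf{Step 3 (assembling the exponent).} The bookkeeping mirrors the Euclidean spherical case with the horizontal space $\mathbb{R}^{2n}$ in the role of the ambient space and $S^{2n-1}$ the sphere. The fixed‑scale square function $S$ inherits the curvature decay $2^{-k(2n-1)/2}$ together with a factor $\sqrt k$ coming from the logarithmic length $\sim k$ of the range of dilation parameters over which the localized symbol meets a given output frequency, so $\|Sf\|_2\lesssim \sqrt k\,2^{-k(2n-1)/2}\|f\|_2$. In $D$ the operator $t\partial_t$ differentiates the oscillatory factor and brings down the frequency size $\sim 2^{k}$, giving $\|Df\|_2\lesssim \sqrt k\,2^{k}2^{-k(2n-1)/2}\|f\|_2$. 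The geometric mean from Step 1 then produces $\|M_{k,l}f\|_2\lesssim \sqrt k\,2^{-k(2n-2)/2}\|f\|_2$, and likewise for $\widetilde M_k$. This is uniform in $l$ because, although on $\supp\beta_{k,l}$ the horizontal frequency $|\sigma|\approx 2^{k-l}$ is smaller, the twisting phase driven by $|\tau|\approx 2^{k}$ supplies the compensating oscillation, so the effective stationary‑phase decay is always governed by $2^{k}$.

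\textbf{Main obstacle.} The crux is the noncommutative/twisted structure: Euclidean Plancherel does not apply directly to the group convolution, and the decisive symbol estimate requires a careful stationary‑phase analysis of the combined \emph{sphere} plus \emph{symplectic} phase. One must correctly identify the nondegenerate directions of this phase and verify that the curvature of $S^{2n-1}$ is not cancelled by the twisting, uniformly across the subfamily $\mu^{k,l}$ and the borderline piece $\widetilde\mu^{k}$, while simultaneously tracking the sharp exponent and the logarithmic $\sqrt k$ loss. Equivalently, in the purely Euclidean picture one regards $f\mapsto f\ast\nu_t$ as a Fourier integral operator on $\mathbb{R}^{2n+1}$ and must establish the Hörmander nondegeneracy (nonvanishing rotational curvature) of its canonical relation before a $TT^{\ast}$/almost‑orthogonality argument can deliver the stated bounds.
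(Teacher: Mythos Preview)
The paper does not give its own proof of this proposition; it simply records the statement and cites \cite[Proposition~2.1]{MS}. Your outline is essentially the M\"uller--Seeger argument: reduce the maximal function to the geometric mean of two square functions via the fundamental theorem of calculus, then estimate the square functions by Plancherel and stationary phase on the oscillatory kernel, picking up the curvature decay $2^{-k(2n-1)/2}$, the $\sqrt{k}$ loss from the $t$-support of the localized symbol, and the extra $2^{k}$ on the derivative piece; the geometric mean gives exactly $\sqrt{k}\,2^{-k(2n-2)/2}$.

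One correction to your Step~2: the pieces $\mu^{k,l}$ and $\widetilde\mu^{k}$ are \emph{not} supported on the horizontal slice $\mathbb{C}^{n}\times\{0\}$---only the full measure $\mu$ is. The cutoff $\gamma_{2}(x_{2n},\bar x)$ is supported in a small ball around $(1,0)\in\mathbb{R}^{2}$, so each piece is a smooth function with thickness $\sim 2^{-k}$ in $\bar x$. This does not break your argument, because the partial Fourier transform in $\bar x$ turns Heisenberg convolution with \emph{any} kernel into a $\lambda$-twisted convolution on $\mathbb{R}^{2n}$ (the computation does not use that the kernel lives at $\bar x=0$); only your stated justification is off. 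In practice M\"uller--Seeger work more directly with the Euclidean Fourier transform on $\mathbb{R}^{2n+1}$ and the shear structure of the group law, but the content is the same.
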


\section{Proof of main results}
\label{Main-Sec}
This section is dedicated to establishing the main results of this article. We begin by proving the boundedness of the vector-valued Nevo--Thangavelu spherical maximal operator.
\subsection{Vector-valued spherical maximal function}
The proof of Theorem~\ref{Sph-Main-Heisenber} relies fundamentally on a set of crucial vector-valued estimates. The first of these estimates follows as a straightforward consequence of Proposition ~\ref{PropMS}.
\begin{theorem}
\label{l2}
Let $(f_j)_{j\geq1}$ be a sequence of measurable functions defined in $\Ha$. If $\bigl(\sum_{j=1}^{\infty}|f_j(\cdot)|^2\bigr)^\frac{1}{2} \in L^2(\Ha)$, then
\[
\biggl\|\Bigl(\sum_{j=1}^{\infty}|M_{k, l}f_j(\cdot)|^2\Bigr)^\frac{1}{2}\biggr\|_{L^2(\Ha)}\leq C(n)\sqrt{k}\, 2^{-k(2n-2)/2}\biggl\|\Bigl(\sum_{j=1}^{\infty}|f_j(\cdot)|^2\Bigr)^\frac{1}{2}\biggr\|_{L^2(\Ha)}.
\]
Similar statemtent is true for $\widetilde M_k.$
\end{theorem}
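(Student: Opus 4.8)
The plan is to derive Theorem~\ref{l2} directly from the scalar $L^2$ bounds of Proposition~\ref{PropMS} by exploiting the fact that each $M_{k,l}$ (and $\widetilde M_k$) is a \emph{positive sublinear} operator whose scalar bound is on $L^2$, together with the Hilbert-space structure of $\ell^2$. The essential observation is that $\ell^2$ is a Hilbert space, so the vector-valued $L^2(\Ha;\ell^2)$-bound for a positive operator is \emph{equivalent} to (in fact, literally the same as) the scalar $L^2(\Ha)$-bound, by Fubini/Tonelli. No interpolation or UMD machinery is needed here: this is the one exponent where the extension is automatic.

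Concretely, I would argue as follows. Fix $k\geq 1$ and $0\leq l<k/3$, and write $A:=M_{k,l}$ for brevity; it is sublinear and satisfies $|A g|\leq A(|g|)$ pointwise with $\|Ag\|_2\leq c_{k,l}\|g\|_2$ where $c_{k,l}=C(n)\sqrt k\,2^{-k(2n-2)/2}$. Given a sequence $(f_j)_{j\geq 1}$ with $F:=\bigl(\sum_j|f_j|^2\bigr)^{1/2}\in L^2(\Ha)$, first note that for each fixed $j$ and a.e.\ $x$ we have $|Af_j(x)|\le A(|f_j|)(x)$, so it suffices to bound $\bigl\|(\sum_j |A(|f_j|)|^2)^{1/2}\bigr\|_2$; thus we may assume $f_j\ge 0$. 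Then, using $A f_j(x) = \sup_{t>0}|f_j * \mu^{k,l}_t(x)|$ and linearity of convolution in the function argument, for each $x$ and each $t>0$ one has $\sum_j |f_j*\mu^{k,l}_t(x)|^2$; however the cleanest route is: apply the scalar bound not to individual $f_j$ but observe that by Minkowski's inequality in $\ell^2$ (valid since for fixed $t$, $g\mapsto g*\mu^{k,l}_t$ is linear and $\mu^{k,l}_t$ is a fixed distribution), $\bigl(\sum_j |f_j*\mu^{k,l}_t(x)|^2\bigr)^{1/2}$ can be controlled, but the supremum over $t$ must then be pulled inside. The genuinely efficient argument avoids this: square the defining inequality and integrate. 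Since $A$ maps into measurable functions and $\|Af_j\|_2^2\le c_{k,l}^2\|f_j\|_2^2$, we get by Tonelli
\[
\biggl\|\Bigl(\sum_{j}|Af_j|^2\Bigr)^{1/2}\biggr\|_{L^2(\Ha)}^2
=\sum_j \|Af_j\|_{L^2(\Ha)}^2
\le c_{k,l}^2\sum_j\|f_j\|_{L^2(\Ha)}^2
= c_{k,l}^2\biggl\|\Bigl(\sum_j|f_j|^2\Bigr)^{1/2}\biggr\|_{L^2(\Ha)}^2,
\]
which is exactly the claimed inequality with constant $c_{k,l}=C(n)\sqrt k\,2^{-k(2n-2)/2}$. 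The identical computation with $\widetilde M_k$ in place of $M_{k,l}$, using the second estimate of Proposition~\ref{PropMS}, yields the statement for $\widetilde M_k$.

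There is essentially no obstacle here; the only point requiring a word of care is the interchange of the sum over $j$ with the $L^2$-norm (i.e.\ justifying $\|(\sum_j|g_j|^2)^{1/2}\|_2^2=\sum_j\|g_j\|_2^2$), which is immediate from Tonelli's theorem applied to the nonnegative integrand $\sum_j|g_j(x)|^2$, and the reduction to $f_j\ge 0$ via the pointwise sublinearity $|M_{k,l}f_j|\le M_{k,l}(|f_j|)$ together with $M_{k,l}(|f_j|)\le M_{k,l}(|f_j|)$ (trivially) — in fact since $\mu^{k,l}$ need not be a positive measure, one should note $|f_j*\mu^{k,l}_t|\le |f_j|*|\mu^{k,l}|_t$ only if one replaces the kernel by its absolute value, which is not what we want; instead simply keep $f_j$ as given and note the computation above never used positivity of $f_j$ at all — it used only the \emph{scalar} $L^2$ bound $\|M_{k,l}f_j\|_2\le c_{k,l}\|f_j\|_2$ from Proposition~\ref{PropMS}, which holds for arbitrary $f_j$. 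Hence the proof is a one-line consequence of Proposition~\ref{PropMS} and Tonelli, and the constant is dimension-dependent only through the harmless factor $C(n)$, which will later be absorbed when these estimates are summed against the dimension-free gain coming from interpolation with the $L^p$, $p$ near $Q/(Q-2)$, endpoint.
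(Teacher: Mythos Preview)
Your proof is correct and follows exactly the paper's approach: the paper's proof is the single sentence ``Since we can commute the $L^2$ and $\ell^2$ norms, the proof simply follows from Proposition~\ref{PropMS},'' which is precisely your Tonelli computation $\|(\sum_j|M_{k,l}f_j|^2)^{1/2}\|_2^2=\sum_j\|M_{k,l}f_j\|_2^2\le c_{k,l}^2\sum_j\|f_j\|_2^2$. The detour through positivity and sublinearity is unnecessary (as you yourself observe at the end), but the core argument is identical.
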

\begin{proof}
Since we can commute the $L^2$ and $\ell^2$ norms, the proof simply follows from Proposition~\ref{PropMS}.     
\end{proof}

\begin{theorem}
\label{FS}
Let $(f_j)_{j\geq1}$ be a sequence of measurable functions defined on $\Ha$. If $\bigl(\sum_{j=1}^{\infty}|f_j(\cdot)|^q\bigr)^\frac{1}{q} \in L^p(\Ha)$, then we have the following vector-valued estimates
\begin{enumerate}[i)]
    \item 
    \label{item1}
\[
\biggl\|\Bigl(\sum_{j=1}^{\infty}|M_{k, l}f_j(\cdot)|^q\Bigr)^\frac{1}{q}\biggr\|_{L^p(\Ha)}\leq C(n, p, q)\,2^{2k}\biggl\|\Bigl(\sum_{j=1}^{\infty}|f_j(\cdot)|^q\Bigr)^\frac{1}{q}\biggr\|_{L^p(\Ha)},\]
for all $1<p, q<\infty.$
\item 
\label{item2} 
\[
\biggl\|\Bigl(\sum_{j=1}^{\infty}|\widetilde M_k \,f_j(\cdot)|^q\Bigr)^\frac{1}{q}\biggr\|_{L^p(\Ha)}\leq C(n, p, q)\,2^{2k}\biggl\|\Bigl(\sum_{j=1}^{\infty}|f_j(\cdot)|^q\Bigr)^\frac{1}{q}\biggr\|_{L^p(\Ha)},\]
for all $1<p, q<\infty.$
\end{enumerate}
\end{theorem}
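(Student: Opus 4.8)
\textbf{Proof plan for Theorem~\ref{FS}.}

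The goal is a \emph{crude} vector-valued $L^p$ bound for each frequency-localized piece $M_{k,l}$ and $\widetilde M_k$, with an acceptable polynomial loss $2^{2k}$ in $k$; the sharp decay in $k$ is not needed here, it is supplied by the $L^2$ estimate of Theorem~\ref{l2}, and the two will later be interpolated (in the proof of Theorem~\ref{Sph-Main-Heisenber}) against each other. So the strategy is to prove a \emph{scalar} pointwise domination of $M_{k,l}f$ and $\widetilde M_kf$ by a dilate-type maximal function controlled by the Hardy--Littlewood maximal operator $\mathcal M$ on $\Ha$, with constant $C(n)\,2^{2k}$, and then feed this into the Fefferman--Stein inequality for $\mathcal M$ (Theorem~\ref{ThmFS}). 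Concretely: the kernel $\mu^{k,l}$ is (up to the fixed cutoffs $\gamma_1,\gamma_2$) the inverse Fourier transform of $\beta_{k,l}(\sigma,\tau)$ in the $(x_{2n}-\phi(x'),\bar x)$ variables, and since $\beta_{k,l}$ is supported where $|\sigma|\approx 2^{k-l}$, $|\tau|\approx 2^k$ and is a normalized bump at that scale, $\mu^{k,l}$ is an $L^1$-normalized-up-to-$2^{?}$ object; more precisely $\|\mu^{k,l}\|_{L^1}\lesssim 1$ while the total mass $\int\mu^{k,l}$ picks up at most the number of frequency variables times the scale, giving $\|\mu^{k,l}\|_{L^1(\Ha)}\lesssim 2^{2k}$ after accounting for the two integration variables $\sigma,\tau$ and the rapid decay of $\widehat{\gamma_2}$. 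One then has the standard bound: if $|K|\le C\,2^{2k}\,\varphi$ where $\varphi\ge 0$ is a fixed radial (Korányi) decreasing $L^1$ function with $\|\varphi\|_{L^1}\lesssim 1$, then $\sup_{t>0}|f*K_t|\le C'\,2^{2k}\,\mathcal Mf$ pointwise.

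First I would make precise the pointwise kernel bound. Writing $\mu^{k,l}(x)=\gamma_1(x')\gamma_2(x_{2n},\bar x)\,G^{k,l}(x_{2n}-\phi(x'),\bar x)$ with $G^{k,l}$ the $(\sigma,\tau)$-inverse Fourier transform of $\beta_{k,l}$, repeated integration by parts in $\sigma$ and $\tau$ (exploiting that $\beta_{k,l}$ is a bump adapted to $|\sigma|\approx 2^{k-l}$, $|\tau|\approx 2^k$) yields $|G^{k,l}(s,v)|\lesssim_N 2^{k-l}\,2^{k}\,(1+2^{k-l}|s|)^{-N}(1+2^k|v|)^{-N}$ for every $N$. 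Hence $|\mu^{k,l}(x)|\lesssim_N 2^{2k}\,\ind_{B^{2n-1}(0,\rho)}(x')\,(1+2^{k-l}|x_{2n}-\phi(x')|)^{-N}(1+2^k|\bar x|)^{-N}$, and since everything is supported in a fixed ball and $x_{2n}$ ranges in a fixed compact set, this is $\lesssim_N 2^{2k}\,\psi(x)$ with $\psi$ a fixed bounded, compactly supported (hence $L^1$, Korányi-radially-dominated) function independent of $k,l$; the loss is exactly the claimed $2^{2k}$ (in fact $2^{(2k-l)}$, which is even better, but $2^{2k}$ suffices). The same computation applies verbatim to $\widetilde\mu^k$, whose frequency support is $|\tau|\approx 2^k$, $|\sigma|\lesssim 2^{2k/3}$, giving a loss $\lesssim 2^{k+2k/3}\le 2^{2k}$.

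Second, with $|\mu^{k,l}|\lesssim 2^{2k}\psi$ and $\psi$ dominated by a constant multiple of a decreasing (in Korányi norm) $L^1$ function on $\Ha$, the elementary comparison principle on spaces of homogeneous type gives $M_{k,l}f(x)=\sup_{t>0}|f*\mu^{k,l}_t(x)|\le C(n)\,2^{2k}\,\mathcal Mf(x)$ for all $x\in\Ha$, and likewise $\widetilde M_kf\le C(n)\,2^{2k}\,\mathcal Mf$ pointwise. Applying this inequality coordinatewise to a sequence $(f_j)$ and then invoking Theorem~\ref{ThmFS} for $\mathcal M$ on $\Ha$ with exponents $1<p,q<\infty$ yields
\[
\biggl\|\Bigl(\sum_{j}|M_{k,l}f_j|^q\Bigr)^{1/q}\biggr\|_{L^p(\Ha)}\le C(n)\,2^{2k}\biggl\|\Bigl(\sum_{j}|\mathcal Mf_j|^q\Bigr)^{1/q}\biggr\|_{L^p(\Ha)}\le C(n,p,q)\,2^{2k}\biggl\|\Bigl(\sum_{j}|f_j|^q\Bigr)^{1/q}\biggr\|_{L^p(\Ha)},
\]
and the identical chain of inequalities proves \ref{item2}. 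The only genuinely technical point — and the step I would be most careful about — is the stationary-phase/integration-by-parts bookkeeping establishing the pointwise kernel estimate with the \emph{correct} exponential factor $2^{2k}$ rather than something worse: one must track that the two integrations in $\sigma$ and $\tau$ each contribute one power of the corresponding frequency scale ($2^{k-l}$ and $2^k$), that the non-stationary phase $\sigma(x_{2n}-\phi(x'))+\tau\cdot\bar x$ allows arbitrary-order decay away from the support, and that the fixed amplitude $\gamma_1\gamma_2$ does not interfere. Everything else is the standard HL-maximal domination plus an invocation of the already-cited vector-valued inequality for $\mathcal M$.
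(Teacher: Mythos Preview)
Your proposal is correct and follows essentially the same approach as the paper: obtain the pointwise kernel bound $|\mu^{k,l}(x)|\lesssim_N 2^{k-l}\,2^{k}\,(1+2^{k-l}|x_{2n}-\phi(x')|)^{-N}(1+2^{k}|\bar x|)^{-N}$ via integration by parts in $(\sigma,\tau)$, deduce $M_{k,l}f\lesssim 2^{2k}\,\mathcal Mf$ pointwise, and then invoke the Fefferman--Stein inequality for $\mathcal M$ (Theorem~\ref{ThmFS}); the argument for $\widetilde M_k$ is identical. The only cosmetic difference is that the paper, after obtaining the same kernel estimate, uses the observation $|x_{2n}-\phi(x')|\simeq|x'|^2$ on $\mathrm{supp}\,\gamma$ to rewrite the bound as $2^{2k}(1+|\bar x|)^{-N}(1+|\underline x|)^{-N}$, whereas you simply absorb everything into a fixed compactly supported $\psi$; both routes land on the same Hardy--Littlewood domination.
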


\begin{proof}
We will only prove \eqref{item1} and the proof of \eqref{item2} follows analogously. Here onwards $\gamma(x)$ will denote $\gamma_1(x') \gamma_2(x_{2n}, \bar{x}).$ 
Let us recall the convoultion kernel $\mu^{k, l},$ for $0\le l<k/3,$
\begin{align}
\nonumber\mu^{k,l}(x)
&= \gamma(x)
\iint e^{i\big(\sigma(x_{2n}-\phi(x'))+\tau \cdot \bar{x}\big)} 
\beta_{k,l}(\sigma,\tau)d\sigma d\tau,\quad\\  \nonumber&=\gamma(x)
\iint e^{i\big(\sigma(x_{2n}-\phi(x'))+\tau \cdot \bar{x}\big)} 
\zeta_1(2^{-k}\sqrt{\sigma^2+|\tau|^2})\zeta_1(2^{l-k}\sigma)d\sigma d\tau,\quad.
\end{align}
A simple change of variable formula implies that

$$\mu^{k,l}(x)=\gamma(x) 2^{k}\, 2^{k-l}
\iint e^{i\big(2^{k-l}\sigma(x_{2n}-\phi(x'))+2^{k} \tau \cdot \bar{x}\big)} 
\zeta_1(\sqrt{\sigma^2+|\tau|^2})\zeta_1(\sigma)d\sigma d\tau\quad.$$

Applying integration by parts we get that
\begin{align*}
\big|\mu^{k,l}(x)\big| \leq C_{N} \frac{2^{km}}{(1+2^k|\bar{x}|)^N} \frac{2^{k-l}}{(1+2^{k-l}|x_{2n}-\phi(x')|)^N}\\
\lesssim  C_{N} \frac{2^{k}}{(1+|\bar{x}|)^N} \frac{2^{k}}{(1+|x_{2n}-\phi(x')|)^N}.  
\end{align*}

Also, considering the support properties of $\gamma,$ we observe that $|x'|\leq \rho$ and $\rho$ can be chosen suitably small to ensure that 
\begin{align}
\nonumber |x_{2n}-\phi(x')|&\simeq |x_{2n}-\sqrt{1-|x'|^2}|\\
\nonumber&\simeq |x_{2n}-(1-c|x'|^2)|\simeq |x'|^2,
\end{align}
considering the fact that $|x_{2n}-1|\lesssim \rho^2.$ Therefore, we obtain the pointwise bound
\begin{align}
\nonumber\big|\mu^{k,l}(x)\big| \leq C_{N} \frac{2^{k}}{(1+2^k|\bar{x}|)^N} \frac{2^{k-l}}{(1+2^{k-l}|x_{2n}-\phi(x')|)^N}\\
\lesssim  C_{N} \frac{2^{k}}{(1+|\bar{x}|)^N} \frac{2^{k}}{(1+|\underline{x}|)^N}. \label{ptwise} 
\end{align}
Using the estimate \eqref{ptwise} we obtain that 
\begin{align}
M_{k, l}f(x)=\sup_{t>0}|f* \mu^{k, l}_{t}(x)|(x)\lesssim 2^{2k} \mathcal{M}f(x).
\label{domHL}
\end{align}

At this point we invoke the Fefferman-Stein inequality for the Hardy-Littlwewood maximal function $\mathcal{M},$ that is, 
\[
\biggl\|\Bigl(\sum_{j=1}^{\infty}|\mathcal{M} f_j(\cdot)|^q\Bigr)^\frac{1}{q}\biggr\|_{L^p(\Ha)}\leq C(n, p, q)\, \biggl\|\Bigl(\sum_{j=1}^{\infty}|f_j(\cdot)|^q\Bigr)^\frac{1}{q}\biggr\|_{L^p(\Ha)}
\]
 holds true for all $1<p, q<\infty.$ Now the Fefferman-Stein inequality and \eqref{domHL} completes the proof.
\end{proof}

We now have the necessary tools to proceed with the proof of Theorem \ref{Sph-Main-Heisenber}.

\begin{proof}[Proof of Theorem~\ref{Sph-Main-Heisenber}]
Recall that
$$M_{S}f \leq M_0f+\sum_{k\ge 1}\big(M_{k,0}f+\sum_{1\le l<k/3} M_{k,l}f
+ \widetilde M_kf\big).$$
As pointed earlier, $M_0$ is pointwise dominated by the maximal function $\mathcal{M},$ and hence $M_{0}$ maps $L^p(\ell^q)$ to $L^p(\ell^q)$ for all $1<p, q<\infty.$ As a first step we prove $L^p(\ell^q)$ estimates for $M_{k, 0}, M_{k, l}$ and $\widetilde{M_{k}}$ and then summing over all frequencies $k\geq 1$ we will conclude the proof. We only provide the details for $M_{k, l}$ since the similar arguments are applicable for  $M_{k, 0}$ and $\widetilde{M_{k}}.$

Observe that for all $1<r,s<\infty$,  and $(f_j(\cdot))_{j\geq1} \in L^r(\Ha;\ell^s)$, we have
\begin{align}
\label{need1}
\biggl\|\Bigl(\sum_{j=1}^{\infty}{M_{k, l}} f_j(\cdot)|^s\Bigr)^\frac{1}{s}\biggr\|_{L^r(\Ha)}= \Bigl\| \mathcal{B}_{l}\bigl((f_j(\cdot))_{j\geq1}\bigr) \Bigr\|_{L^r(\Ha;\, \ell^s(L^\infty(\,(0, \infty)\,)))},
\end{align}
where 
$$\mathcal{B}_{k, l} :  L^r(\Ha;\ell^s) \to L^r(\Ha;\ell^s(L^\infty(\,(0, \infty)\,))),$$
defined by $\mathcal{B}_{l}((f_j(\cdot))_{j\geq1}) =\bigl(t \mapsto f_j*\mu^{k, l}_{t} \bigr)_{m\geq 1}.$
 Rewriting in this fashion, Proposition \ref{l2} implies that
 \begin{equation}\label{interpol1}\|\mathcal{B}_{k, l}\|_{L^2(\Ha;\ell^2) \to L^2(\Ha;\ell^2(L^\infty(\,(0, \infty))\,)))} \leq C(n
 )\, \sqrt{k}\, 2^{-k(2n-2)/2}\end{equation}
Similarly, Proposition \ref{FS} yields the following
\begin{equation}\label{interpol12} \|\mathcal{B}_{k, l}\|_{L^{r}(\mathbb{R}^n;\ell^{s}) \to L^{r}(\mathbb R^n;\ell^{s}(L^\infty(\,(0, \infty)\,)))} \leq C({n, r, s})\, 2^{2k},\end{equation}
for any $1 < r, s < + \infty$. Using complex interpolation between \eqref{interpol1} and \eqref{interpol12} we get 
\begin{align}\label{interpol3}
\nonumber \|\mathcal{B}_{k, l}\|_{L^p(\mathbb{R}^n;\ell^q) \to L^p(\mathbb R^n;\ell^q(L^\infty(\,(0, \infty)\,))) } &\leq C(n, p, q) \sqrt{k}\,2^{k\big((-\frac{(2n-2)}{2})\theta +2(1-\theta)\big)}\\
&= C(n, p, q)\, \sqrt{k}\,2^{k\gamma(n,\theta)},
\end{align}
where $$\gamma(n, \theta):=-\frac{(2n-2)}{2}\theta +2(1-\theta),$$ for any $\theta\in (0, 1),$ and
\begin{align}\frac1p = \frac\theta 2 + (1 - \theta) \frac1{r},\,\,\,\,\frac1q = \frac\theta 2  + (1 - \theta) \frac1{s}\label{relation}.\end{align}

Upon simplification, $\gamma(n, \theta)<0$ if and only if 
\begin{align*}
&2<\theta \bigg(2+\frac{2n-2}{2}\bigg)\\
&\iff \theta>\frac{4}{2n+2}=\frac{4}{Q}.
\end{align*}
 For any $\frac{Q}{Q-2}<p, q<\frac{Q}{2}$ we can find $\theta$ such that 
 \begin{align}
 \nonumber \frac{2}{Q}<\frac{\theta}{2}<\frac{1}{p}, \frac{1}{q}<1-\frac{\theta}{2}<\frac{Q-2}{Q} 
 \end{align}
 Therefore, we proved that for $\frac{Q}{Q-2}<p, q<\frac{Q}{2}$ there exists a $\gamma:=\gamma(n, \theta)<0$ such that 
\begin{align}
\label{FS1}
\biggl\|\Bigl(\sum_{j=1}^{\infty}{M_{k, l}} f_j(\cdot)|^q\Bigr)^\frac{1}{q}\biggr\|_{L^p(\Ha)}\leq C({n, p, q}) 2^{k\gamma} \sqrt{k} \, \biggl\|\Bigl(\sum_{j=1}^{\infty} |f_j(\cdot)|^q\Bigr)^\frac{1}{q}\biggr\|_{L^p(\Ha)}.    
\end{align} 
A similar argument will imply that
\begin{align}
\label{FS2}
\biggl\|\Bigl(\sum_{j=1}^{\infty}{M_{k, 0}} f_j(\cdot)|^q\Bigr)^\frac{1}{q}\biggr\|_{L^p(\Ha)}\leq C({n, p, q})\, 2^{k\gamma} \sqrt{k} \, \biggl\|\Bigl(\sum_{j=1}^{\infty} |f_j(\cdot)|^q\Bigr)^\frac{1}{q}\biggr\|_{L^p(\Ha)},    
\end{align} 
and 
\begin{align}
\label{FS3}
\biggl\|\Bigl(\sum_{j=1}^{\infty}\widetilde{M_{k}}f_j(\cdot)|^q\Bigr)^\frac{1}{q}\biggr\|_{L^p(\Ha)}\leq C({n, p, q})\, 2^{k\gamma} \sqrt{k} \, \biggl\|\Bigl(\sum_{j=1}^{\infty} |f_j(\cdot)|^q\Bigr)^\frac{1}{q}\biggr\|_{L^p(\Ha)}.    
\end{align} 
Combining the above estimates we obtain that

\begin{align}
\nonumber&\biggl\|\Bigl(\sum_{j=1}^{\infty}|M_{S}f_j(\cdot)|^q\Bigr)^\frac{1}{q}\biggr\|_{L^p(\Ha)}\\
\nonumber&\leq \sum_{k\geq 1} \left(\sum_{1\leq l\leq \frac{k}{3}}\biggl\|\Bigl(\sum_{j=1}^{\infty}{M_{k, l}} f_j(\cdot)|^q\Bigr)^\frac{1}{q}\biggr\|_{L^p(\Ha)}+\biggl\|\Bigl(\sum_{j=1}^{\infty}{M_{k, 0}} f_j(\cdot)|^q\Bigr)^\frac{1}{q}\biggr\|_{L^p(\Ha)}\right.\\
\nonumber &\left.+\biggl\|\Bigl(\sum_{j=1}^{\infty}\widetilde{M_{k}}f_j(\cdot)|^q\Bigr)^\frac{1}{q}\biggr\|_{L^p(\Ha)}\right)\\
\nonumber&\leq C(n, p, q)  \biggl\|\Bigl(\sum_{j=1}^{\infty}|f_j(\cdot)|^q\Bigr)^\frac{1}{q}\biggr\|_{L^p(\Ha)} \sum_{k\geq 1}\left(2^{k\gamma}k^{5/2}\right)\\
&\nonumber\leq C(n, p,q) \biggl\|\Bigl(\sum_{j=1}^{\infty}|f_j(\cdot)|^q\Bigr)^\frac{1}{q}\biggr\|_{L^p(\Ha)},   
\end{align}
as $\gamma<0$ and which in turn is true since $\frac{Q}{Q-2}<p, q<\frac{Q}{2}.$

\end{proof}

\subsection{Proof of Theorem~\ref{Main-thm}}
In this section we provide the proof of Theorem~\ref{Main-thm}. We start with the following essential lemma which articulates that the Hardy-Littlewood maximal function over Kor\'anyi averages can be dominated pointwise by the composition of an one-dimensional Hardy-Littlewood maximal operator and the Nevo--Thangavelu maximal operator $M_{S}$. More presicely, we have the following: 
\begin{lemma}
\label{hlmax-ptest}
    For $f\in C(\Ha),$ the following pointwise estimate holds: 
    $$\mathcal{M}f(x)\leq M_{\mathbb{R}}(M_Sf)(x),$$
    for $x\in \Ha.$ 
\end{lemma}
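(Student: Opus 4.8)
The plan is to realise an arbitrary (left-translated, centred) Kor\'anyi-ball average as an iterated average — an inner horizontal spherical mean $A_\rho$ followed by an outer one-dimensional average in the central variable — the two being tied together by an elementary identity in the group law. Fix $x=(\underline x,\overline x)\in\Ha$ and $r>0$. Since $\mathcal{M}f=\mathcal{M}(|f|)$ it suffices to treat $f\ge0$, in which case $A_\rho f\ge0$ and $M_Sf=\sup_{\rho>0}A_\rho f$. Substituting $y=x\cdot w$ (Haar measure is left-invariant and this substitution has unit Jacobian) rewrites $\int_{B(x,r)}f\,dy$ as $\int_{|w|<r}f(x\cdot w)\,dw$, where $|w|<r$ means $\|\underline w\|^4+\overline w^2<r^4$. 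The crucial observation is that, writing $w=(\underline w,\overline w)$,
\[
x\cdot(\underline w,\overline w)=(\underline x,\,\overline x+\overline w)\cdot(\underline w,0),
\]
since both sides equal $(\underline x+\underline w,\ \overline x+\overline w+\tfrac12\,\underline x^{t}J\underline w)$; in words, evaluating $f$ at $x\cdot w$ is the same as evaluating $f$ at a point of the horizontal sphere of radius $\|\underline w\|$ centred at the vertically shifted point $(\underline x,\overline x+\overline w)$.

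Next I would pass to polar coordinates $\underline w=\rho\underline u$ on $\C^n$ ($\rho=\|\underline w\|\ge0$, $\underline u\in S^{2n-1}$), so that $d\underline w=|S^{2n-1}|\,\rho^{2n-1}\,d\rho\,d\mu(\underline u)$ with $\mu$ the normalised surface measure, and put $s:=\overline w$; the constraint becomes $\rho^4+s^2<r^4$. Performing the $\underline u$-integration, which by the displayed identity produces exactly $A_\rho f(\underline x,\overline x+s)$, and using Tonelli's theorem (legitimate since $f\ge0$ and continuous), gives
\[
\int_{B(x,r)}f(y)\,dy=|S^{2n-1}|\int_{|s|<r^2}\Big(\int_0^{(r^4-s^2)^{1/4}}A_\rho f(\underline x,\overline x+s)\,\rho^{2n-1}\,d\rho\Big)ds.
\]
Then I would bound $A_\rho f(\underline x,\overline x+s)\le M_Sf(\underline x,\overline x+s)$, pull this factor out of the inner integral, evaluate $\int_0^{(r^4-s^2)^{1/4}}\rho^{2n-1}\,d\rho=\tfrac1{2n}(r^4-s^2)^{n/2}$, and combine with the identity $|B(x,r)|=|B(0,r)|=r^{Q}\cdot\tfrac{|S^{2n-1}|}{2n}\,c_n$ (the case $f\equiv1$ of the same computation, $c_n:=\int_{-1}^1(1-t^2)^{n/2}\,dt$, $Q=2n+2$). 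After the change of variables $t=\overline x+s$ this yields
\[
\frac1{|B(x,r)|}\int_{B(x,r)}f(y)\,dy\ \le\ \int_\R \Psi_r(\overline x-t)\,M_Sf(\underline x,t)\,dt,\qquad \Psi_r(u):=\frac{(r^4-u^2)_+^{\,n/2}}{c_n\,r^{2n+2}}.
\]

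Now $\Psi_r(u)=r^{-2}\Psi(r^{-2}u)$ with $\Psi(v):=c_n^{-1}(1-v^2)_+^{\,n/2}$ a fixed, even, nonincreasing-in-$|v|$ probability density on $\R$; hence each $\Psi_r$ is a nonnegative, even, nonincreasing kernel of mass one. The final step is then the classical domination of convolution against such a kernel by the one-dimensional (centred) Hardy--Littlewood maximal operator $M_\R$ acting in the central variable: for $g\ge0$,
\[
\int_\R\Psi_r(\overline x-t)\,g(t)\,dt\ \le\ \|\Psi_r\|_{L^1(\R)}\,M_\R g(\overline x)=M_\R g(\overline x),
\]
which one proves via the layer-cake decomposition $\Psi_r(u)=\int_0^\infty\mathbf{1}_{\{|u|<a(h)\}}\,dh$ into symmetric intervals, the bound $\int_{\overline x-a(h)}^{\overline x+a(h)}g\le2a(h)\,M_\R g(\overline x)$ on each, and integration in $h$ (using $\int_0^\infty 2a(h)\,dh=\|\Psi_r\|_{L^1}=1$). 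Taking $g(t)=M_Sf(\underline x,t)$ and then the supremum over $r>0$ gives $\mathcal{M}f(x)\le M_\R(M_Sf)(x)$.

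The only genuinely non-routine ingredient is the group identity $x\cdot(\underline w,\overline w)=(\underline x,\overline x+\overline w)\cdot(\underline w,0)$, which is exactly what turns the centred Kor\'anyi-ball average into a superposition of horizontal spherical means about vertically shifted centres; the rest is bookkeeping. The one point deserving care is keeping the final constant equal to $1$: one must resist estimating $(r^4-u^2)^{n/2}$ crudely by $r^{2n}$, which would cost a factor $c_n^{-1}\sim\sqrt n$ — it is the recognition of $\Psi_r$ as an $L^1$-dilate of a single profile $\Psi$, so that $\|\Psi_r\|_{L^1}=1$ for every $r$, that makes the domination lemma give constant $1$ (and for the use in Theorem~\ref{Main-thm} even a dimension-dependent constant here would be harmless).
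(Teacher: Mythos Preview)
Your proof is correct and follows essentially the same approach as the paper's: decompose the Kor\'anyi-ball average into horizontal spherical means, bound each by $M_S f$ at a vertically shifted centre, and then dominate the resulting one-dimensional convolution against the even, nonincreasing, unit-mass kernel $\Psi_r$ by $M_\R$. The only cosmetic difference is that the paper invokes left-invariance to reduce to $x=0$ (where your group identity $x\cdot(\underline w,\overline w)=(\underline x,\overline x+\overline w)\cdot(\underline w,0)$ becomes trivial), whereas you work at a general $x$ and make the identity explicit.
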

\begin{proof}
     In view of the left-invariance of $\mathcal{M}$, it is enough to prove that 
     \begin{equation}
     \label{dimestclaim}
         \mathcal{M}f(0)\leq M_{\mathbb{R}}(M_Sf)(0).
     \end{equation}
    In order to do so, for any $r>0$, we note that 
     \begin{align}
     \label{dimestlemeq1}
         \int_{B(0,r)}|f(x)|dx= \int_{|\bar{x}|\leq r^2}\int_{\Sigma^r_{\bar{x}}}|f(\underline{x}, \bar{x})| d\underline{x}~d\bar{x},
     \end{align}
     where $\Sigma^r_z$ is defined by 
     $$\Sigma^r_{\bar{x}}=\{\underline{x}\in \mathbb{R}^{2n}:  |\underline{x}|^4+|\bar{x}|^2\leq r^4\}.$$
     Using polar decomposition in the integration over $\Sigma^r_{\bar{x}}$ for fixed $\bar{x}$, we have, 
     \begin{align*}
         \int_{\Sigma^r_{\bar{x}}}|f(\underline{x}, \bar{x})| d\underline{x}&=\int_{\{s>0: s^4+|\bar{x}|^2\leq r^4\}}\int_{S^{2n-1}} |f(s\omega, \bar{x})|s^{2n-1}d\mu(\omega)~ds\\
         &\leq \mu(S^{2n-1})\int_{\{s>0: s^4+|\bar{x}|^2\leq r^4\}} M_Sf(0, \bar{x}) s^{2n-1}~ds,
     \end{align*}
     which, taking into account \eqref{dimestlemeq1}, implies 
     \begin{align*}
         &\frac{1}{B(0,r)|} \int_{B(0,r)}|f(x)|dx\\ &\leq \frac{1}{B(0,r)|}\int_{|\bar{x}|\leq r^2}\left(\mu(S^{2n-1})\int_{\{s>0: s^4+|\bar{x}|^2\leq r^4\}}s^{2n-1}ds\right) M_Sf(0, \bar{x})~d\bar{x}.
     \end{align*}
     But notice that the quantity in the bracket in the above integral is nothing but the $2n$-dimensional Lebesgue measure $|\Sigma^r_{\bar{x}}|$ of $\Sigma^r_{\bar{x}}.$ Therefore, writing for every $r>0$, $$\varphi_r(\bar{x})=\frac{1}{|B(0, r)|}  |\Sigma^r_{\bar{x}}|\chi_{B_{\R}(0,r^2)}(\bar{x})\quad\quad (\bar{x}\in \mathbb{R}),$$ we arrive at 
     \begin{equation}
     \label{dimestlemeq2}
         \frac{1}{|B(0, r)|} \int_{B(0, r)}|f(x)|dx\leq \int_{\R} \varphi_r(\bar{x}) M_Sf(0, \bar{x})~d\bar{x}.
     \end{equation}
    Now notice first that $\int_{\R}\varphi_r(\bar{x})d\bar{x}=1$. Indeed, this is easy to see from the observation that 
    $$\int_{|\bar{x}|\leq r^2} |\Sigma^r_{\bar{x}}|\,d\bar{x}= |B(0, r)|.$$
   Realizing now that for each $\bar{x}$ fixed,  $\Sigma^r_{\bar{x}}$ is simply a standard $2n$-dimensional Euclidean ball of radius $(r^4-|\bar{x}|^2)^{\frac14}$, we have $|\Sigma^r_{\bar{x}}|=c_n (r^4-|\bar{x}|^2)^{\frac{n}2}$, proving directly that $\varphi_r$ is radially decreasing. Hence, using \cite[Theorem 2.1.10]{Classical-Gra} from \eqref{dimestlemeq2} we obtain 
   $$\frac{1}{|B(0, r)|} \int_{B(0, r)}|f(x)|dx\leq M_{\R}(M_Sf)(0)$$
   whence \eqref{dimestclaim} follows.    
\end{proof}

\begin{proof}[Proof of Theorem~\ref{Main-thm}]
    For any $m\leq n$, we view $\Ha$ as $\mathbb{H}^{m}\times \R^{2(n-m)}$ and write $x\in \Ha$ as $x=({x'}, {x''}),$ where $$x'=(\underline{x'}, \overline{x'})=(x_{1}, \ldots, x_{m},0, \ldots, 0,x_{n+1}, \ldots, x_{n+m}, 0,\ldots, 0, \overline{x})\in \mathbb{H}^{m}.$$
    Moreover, we view the normalised measure on $S^{2m-1}$ as a normalised measure supported on $(2m-1)$-dimensional sphere in the hyperplane $$\{x=({x'}, {x''})\in \mathbb{H}^{m}\times \R^{2(n-m)}: \overline{x'}=0=x''\}$$
    and call it $\tilde{\mu}.$ We then consider the measure $\nu_r$ defined by 
    $$\nu_r(E)=\int_{U(n)} \tilde{\mu}_r(\sigma \cdot E)\,d\sigma,$$
    where $d\sigma$ denote the normalised Haar measure on $U(n).$ Clearly $\nu_r$ defines a normalised surface measure on the sphere of radius $r$ in $\mathbb{R}^{2n}$ whence $\nu_r=\mu_r$ by uniqueness. Therefore, we have 
    \begin{align*}
        f\ast \mu_r(x)&=f\ast \nu_r(x)= \int_{U(n)}R_{\sigma}f \ast \tilde{\mu}_r(\sigma^{-1}\cdot x)\,d\sigma\\
        \nonumber &= \int_{U(n)}R_{\sigma^{-1}}\left(R_{\sigma}f \ast \tilde{\mu}_r\right)(x)\,d\sigma
    \end{align*}
    leading to 
    \begin{align}
    \label{dombyavgofrot}
        M_Sf(x)\leq \int_{U(n)} \sup_{r>0}|R_{\sigma^{-1}}\left(R_{\sigma}f \ast \tilde{\mu}_r\right)( x)|\, d\sigma.
    \end{align}
    where recall that $R_{\sigma}f$ stand for the rotation of $f$ in the horizontal hyperplane by $\sigma$, viz. $R_{\sigma}f(x):=f(\sigma\cdot \underline{x},\overline{x})$ for $x=(\underline{x},\overline{x})\in\mathbb{H}^n.$  For any $\sigma\in U(n)$, writing $$\widetilde{M}^{\sigma}_Sf(x):= \sup_{r>0}|R_{\sigma^{-1}}\left(R_{\sigma}f \ast \tilde{\mu}_r\right)( x)| \quad\quad (x\in \Ha),$$ we next claim that whenever $\frac{m+1}{m}<p,q< {m+1},$ we have
    \begin{equation}
    \label{dombydimless}
\biggl\|\Bigl(\sum_{j=1}^{\infty}|\widetilde{M}^{\sigma}_{S}f_j(\cdot)|^q\Bigr)^\frac{1}{q}\biggr\|_{L^p(\Ha)}\leq C({m, p, q})\,\biggl\|\Bigl(\sum_{j=1}^{\infty}|f_j(\cdot)|^q\Bigr)^\frac{1}{q}\biggr\|_{L^p(\Ha)}
    \end{equation}
 for any  sequence of measurable functions $(f_j)_{j\geq1}$ such that  $\bigl(\sum_{j=1}^{\infty}|f_j(\cdot)|^q\bigr)^\frac{1}{q} \in L^p(\Ha)$, where the constant $C({m,p,q})$ is independent of $n$ and $\sigma.$ To substantiate this claim, using the fact that the Lebesgue measure is rotation invariant, we note that 
 \begin{align*}  \biggl\|\Bigl(\sum_{j=1}^{\infty}|\widetilde{M}^{\sigma}_{S}f_j(\cdot)|^q\Bigr)^\frac{1}{q}\biggr\|_{L^p(\Ha)} = \biggl\|\Bigl(\sum_{j=1}^{\infty}\big(\sup_{r>0}|R_{\sigma}f_j\ast \widetilde{\mu}_r(\cdot)|\big)^q\Bigr)^\frac{1}{q}\biggr\|_{L^p(\Ha)}.
 \end{align*}
But, in view of the identification $\Ha=\mathbb{H}^{m}\times \R^{2(n-m)}$ as mentioned in the beginning of the proof, the $p$th power of the last expression takes the form 
\begin{align}
\label{RS}
    \int_{\mathbb{H}^{m}} \int_{\R^{2(n-m)}} \Bigl(\sum_{j=1}^{\infty}\big(\sup_{r>0}|R_{\sigma}f_j\ast \widetilde{\mu}_r(x',x'')|\big)^q\Bigr)^\frac{p}{q} dx''dx'.
\end{align}
But then, recalling the Heisenberg group law, it is easy to see that 
$$f\ast \widetilde{\mu}_r(x)= f_{x''}\ast_{\mathbb{H}^{m}}\widetilde{\mu}_r(x')$$
where $f(x)=f(x',x'')=f_{x''}(x'),$ and $\widetilde{\mu}_r$ has been viewed as the normalised measure on the $(2m-1)$-dimensional complex sphere of radius $r$ in $\mathbb{H}^{m}.$ Using this observation and Fubini,  we rewrite the expression \eqref{RS} as 
\begin{align*}
    \int_{\R^{2(n-m)}} \left( \int_{\mathbb{H}^{m}} \Bigg(\sum_{j=1}^\infty \big(M^{m}_S((R_{\sigma}f_j)_{x''})(x')\big)^q\Bigg)^{\frac{p}q}dx'\right)dx''
\end{align*}
where $M^{m}_S$ stands for the spherical maximal operator in $\mathbb{H}^{m}$, in which,  the superscript is used to emphasize the dimension. Now whenever $\frac{m+1}{m}<p,q< {m+1}$, by Theorem \ref{Sph-Main-Heisenber}, we have 
\begin{align*}
    & \int_{\R^{2(n-m)}} \left( \int_{\mathbb{H}^{m}} \Bigg(\sum_{j=1}^\infty \big(M^{m}_S((R_{\sigma}f_j)_{x''})(x')\big)^q\Bigg)^{\frac{p}q}dx'\right)dx''\\
    & \leq C({m,p,q})^p \int_{\R^{2(n-m)}} \left( \int_{\mathbb{H}^{m}} \Bigg(\sum_{j=1}^\infty |(R_{\sigma}f_j)_{x''}(x')|^q\Bigg)^{\frac{p}q}dx'\right)dx''\\
    &= C({m,p,q})^p \int_{\R^{2(n-m)}} \int_{\mathbb{H}^{m}} \Bigg(\sum_{j=1}^\infty |(R_{\sigma}f_j)(x',x'')|^q\Bigg)^{\frac{p}q}dx'dx''\\
    &=C({m,p,q})^p\biggl\|\Bigl(\sum_{j=1}^{\infty}|f_j(\cdot)|^q\Bigr)^\frac{1}{q}\biggr\|^p_{L^p(\Ha)},
\end{align*}
where in the last equality we have used the rotation invariance of the Haar measure. This proves the claim \eqref{dombydimless}.

We now complete the proof by appealing to the observation \eqref{dombydimless}, and Lemma \ref{hlmax-ptest}. First, let $1<p,q<\infty$, and $(f_j)_{j\geq1}$ be a sequence of measurable functions on $\Ha$ such that  $\bigl(\sum_{j=1}^{\infty}|f_j(\cdot)|^q\bigr)^\frac{1}{q} \in L^p(\Ha).$ 
Now if $n\leq \max\{\frac{1}{p-1}, \frac{1}{q-1}\}$ or $n\leq \max \{p-1, q-1\}$, then the result follows from the standard Fefferman-Stein inequality for Hardy-Littlewood maximal operator, that is from Theorem~\ref{ThmFS}, which is known for more general context of spaces of homogeneous type. 
Now to complete the proof, it remains to consider the case when  $\frac{n+1}{n}<p,q< n+1$ or in other words, $\frac{Q}{Q-2}<p,q<\frac{Q}2.$  In this case, in view of the Lemma \ref{hlmax-ptest},   using the Fefferman-Stein inequality for one-dimensional maximal operator $M_{\R}$, we obtain 
\begin{align*}
\biggl\|\Bigl(\sum_{j=1}^{\infty}|\mathcal{M}f_j(\cdot)|^q\Bigr)^\frac{1}{q}\biggr\|_{L^p(\Ha)}\leq \biggl\|\Bigl(\sum_{j=1}^{\infty}|M_Sf_j(\cdot)|^q\Bigr)^\frac{1}{q}\biggr\|_{L^p(\Ha)}.
\end{align*} 
We now set $$n_0:= \left\lfloor\max\bigg\{ \frac{1}{p-1}, \frac{1}{q-1}, p-1, q-1\bigg\}\right\rfloor+1.$$ Then $n_0<n$ and applying \eqref{dombyavgofrot} with $m=n_0$, we have
$$\biggl\|\Bigl(\sum_{j=1}^{\infty}|M_Sf_j(\cdot)|^q\Bigr)^\frac{1}{q}\biggr\|_{L^p(\Ha)}\leq \biggl\|\Bigl(\sum_{j=1}^{\infty}|\widetilde{M}^\sigma_Sf_j(\cdot)|^q\Bigr)^\frac{1}{q}\biggr\|_{L^p(\Ha)}.$$
But since by definition, $\frac{n_0+1}{n_0}<p,q<n_0+1$, by using \eqref{dombydimless}, the above inequality yields 
$$\biggl\|\Bigl(\sum_{j=1}^{\infty}|\mathcal{M}f_j(\cdot)|^q\Bigr)^\frac{1}{q}\biggr\|_{L^p(\Ha)}\leq C({n_0,p,q})\biggl\|\Bigl(\sum_{j=1}^{\infty}|f_j(\cdot)|^q\Bigr)^\frac{1}{q}\biggr\|_{L^p(\Ha)}.$$
Finally, as $n_0$ only depends on $p$ and $q$, this completes the proof of the theorem.
\end{proof}

\subsection{UMD Banach lattice-valued maximal function}
We begin by recalling some relevant properties of the heat semi-group associated with the sublaplacian $\mathcal{L}$ on $\Ha.$  It is well-known that the fundamental solution of the corresponding heat equation $$\partial_t u(x,t)+ \mathcal{L}u(x,t)=0,\quad\,\,((x,t)\in \Ha\times \R^+)$$
is given by the heat kernel, $q_t$ which has the following properties (see e.g., \cite{FolS})
\begin{enumerate}[(I)]
    \item The function $(x,t)\mapsto q_t(x)$ is smooth on $\Ha\times \R^+$ satisfying \\$q_t(x)=t^{-Q/2} q_1(\delta_{t^{-1/2}}x).$ Moreover, $\|q_t\|_1=1.$
    \item There exist positive constants $C, c_1$, and $c_2$ such that 
    \begin{equation}
        \label{heatest}
        \frac1C~t^{-\frac{Q}{2}}~e^{-\frac{|x|^2}{c_1t}}\leq q_t(x)\leq C ~t^{-\frac{Q}{2}}~e^{-\frac{|x|^2}{c_2t}}\quad\quad ((x,t)\in \Ha\times\R^+).
    \end{equation}
    \end{enumerate}
    It then follows that the associated semi-group defined by $$e^{-t\mathcal{L}}f:=f\ast q_t \quad\quad (f\in L^p(\Ha), 1\leq p\leq \infty)$$
   is a regular contraction on $L^p(\Ha)$, and self-adjoint on $L^2(\Ha).$ Therefore, by Stein \cite[Theorem 1, Section 2, Chapter III]{SteinLP}, it follows that for $1<p<\infty$, the map $t\mapsto e^{-t\mathcal{L}}$ extends to an anlytic $L^p$ operator valued map defined in the sector $$S_p:=\bigg\{z\in\mathbb{C}: z\neq 0,~|\arg (z)|<\frac\pi2 \bigg(1-\bigg|\frac2p-1\bigg|\bigg) \bigg\},$$
   establishing that $\{e^{-t\mathcal{L}}\}_{t>0}$ is an analytic semigroup on $L^p(\Ha).$ Consequently, given an UMD Banach lattice $E$ on a measure space $(\Omega, \nu)$, in view of  \cite[Theorem 2]{X},  the associated sectorial heat maximal function defined by 
   $$M_{\mathcal{L}}f(x, \omega):=\sup_{z\in S_p}|e^{-z\mathcal{L}}f(\cdot,\omega)(x)|,\quad\quad ((x,\omega)\in \Ha\times \Omega),$$
   satisfies 
   \begin{equation}
   \label{heatmaxumd}
       \|M_{\mathcal{L}}f\|_{L^p(\Ha; E)}\lesssim \|f\|_{L^p(\Ha; E)}\quad\quad (f\in L^p(\Ha; E)).
   \end{equation}
   Now using the heat kernel estimate \eqref{heatest}, we notice that 
   $$t^{-Q/2}\chi_{B(0, t^{-1/2})}(x)\lesssim t^{-\frac{Q}{2}}~e^{-\frac{|x|^2}{c_1t}}\leq q_t(x),$$
   which proves the following pointwise estimate 
   $$\mathcal{M}f(x)\lesssim \sup_{t>0}|f\ast q_t(x)|\quad\quad (x\in \Ha).$$
   This, together with \eqref{heatmaxumd}, then proves the following: 
   \begin{prop}
   \label{withdimumdbdd}
       Let $1<p<\infty$, and $E$ be an UMD Banach lattice on a measure space $(\Omega, \nu)$. Then there exists a constant $C(n)$ depending on $n$ such that 
       $$\|\mathcal{M}f\|_{L^p(\Ha; E)}\leq C(n) \|f\|_{L^p(\Ha; E)}\quad\quad (f\in L^p(\Ha; E)).$$
   \end{prop}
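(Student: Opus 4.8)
The plan is to prove Proposition~\ref{withdimumdbdd} by reducing the Hardy--Littlewood maximal function to the heat semigroup maximal function, for which UMD-lattice bounds are already available in the literature. First I would recall the two facts collected in the text: the Gaussian lower bound $q_t(x)\gtrsim t^{-Q/2}e^{-|x|^2/(c_1 t)}$ from \eqref{heatest}, and the boundedness \eqref{heatmaxumd} of the sectorial heat maximal function $M_{\mathcal{L}}$ on $L^p(\Ha;E)$ for an UMD Banach lattice $E$, which follows from the analyticity of $\{e^{-t\mathcal{L}}\}_{t>0}$ on $L^p(\Ha)$ (Stein's theorem, via \cite{SteinLP}) together with the lattice maximal theorem of \cite{X}. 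The key elementary observation is that from the lower heat bound one gets the pointwise domination
\[
t^{-Q/2}\chi_{B(0,t^{-1/2})}(x)\lesssim q_t(x)\qquad(x\in\Ha),
\]
since on the ball $|x|<t^{-1/2}$ the exponential factor $e^{-|x|^2/(c_1t)}$ is bounded below by a constant; and because $|B(0,r)|=C_Q r^Q$, this means that each normalized ball average $\frac{1}{|B(0,r)|}\int_{B(0,r)}|f|$, centred at the origin, is dominated by a constant times $f\ast q_{r^{-2}}(0)$. By left-invariance of both $\mathcal{M}$ and the convolution semigroup, the same holds at every point, giving the pointwise bound $\mathcal{M}f(x)\lesssim \sup_{t>0}|f\ast q_t(x)|$ stated just before the proposition.

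Next I would pass to the lattice-valued setting. For $f\in L^p(\Ha;E)$ the same pointwise inequality applies $\omega$-by-$\omega$ (or, more precisely, using that $|\cdot|$ is the lattice absolute value, applies to $|f(\cdot,\omega)|$), so that
\[
\mathcal{M}f(x,\omega)\lesssim \sup_{t>0}\bigl|f\ast q_t(x,\omega)\bigr|\le \sup_{z\in S_p}\bigl|e^{-z\mathcal{L}}f(\cdot,\omega)(x)\bigr|=M_{\mathcal{L}}f(x,\omega),
\]
where the middle inequality just restricts the holomorphic sector $S_p$ to the positive real axis $t>0$. Taking $E$-norms in $\omega$ and then $L^p(\Ha)$-norms in $x$, and invoking \eqref{heatmaxumd}, yields $\|\mathcal{M}f\|_{L^p(\Ha;E)}\lesssim \|M_{\mathcal{L}}f\|_{L^p(\Ha;E)}\lesssim \|f\|_{L^p(\Ha;E)}$, which is the assertion, with a constant that a priori depends on $n$ (through the dimensional constants in the heat kernel estimate and in Stein's analyticity theorem).

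The only genuinely delicate point is to make sure the inputs are correctly invoked rather than the elementary chain of inequalities: specifically, that the heat semigroup on $\Ha$ is a symmetric diffusion (contraction) semigroup so that Stein's theorem provides analyticity on the sector $S_p$, and that \cite[Theorem 2]{X} indeed delivers the lattice-valued maximal bound \eqref{heatmaxumd} for the sectorial maximal function over $S_p$ (not merely the radial maximal function over $t>0$); both are already recorded in the excerpt, so here they may be cited directly. The dimensional dependence of the constant is not a problem at this stage, since Proposition~\ref{withdimumdbdd} is the ``with dimension'' statement that will later be bootstrapped — via the slicing and rotation-averaging argument used for Theorem~\ref{Main-thm}, together with the vector-valued spherical maximal bound — into the dimension-free Theorem~\ref{main-thm2}. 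Thus the proof of the proposition itself is short: assemble the pointwise heat-kernel domination and quote \eqref{heatmaxumd}.
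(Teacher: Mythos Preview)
Your proposal is correct and follows exactly the same route as the paper: dominate the normalized ball indicator by the heat kernel via the lower Gaussian bound \eqref{heatest}, deduce the pointwise inequality $\mathcal{M}f\lesssim \sup_{t>0}|f\ast q_t|$, and then quote the lattice-valued sectorial maximal bound \eqref{heatmaxumd} coming from Stein's analyticity and \cite{X}. One small slip (also present in the paper's display) is the radius: for $e^{-|x|^2/(c_1t)}$ to be bounded below one needs $|x|<t^{1/2}$, not $|x|<t^{-1/2}$, so the relevant comparison is $t^{-Q/2}\chi_{B(0,t^{1/2})}\lesssim q_t$ and the ball average at radius $r$ is controlled by $f\ast q_{r^{2}}$ rather than $f\ast q_{r^{-2}}$; this does not affect the argument.
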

    We now turn to the proof of the second main result. 
    \begin{proof}[Proof of Theorem \ref{main-thm2}]
        Fix $1<p<\infty$, and an UMD Banach lattice $Z.$ First, we observe that it is enough to prove that there exists $n_0$ (sufficiently large) depending only on $p$, and $Z$ such that for all $n\geq  n_0$ we have  
        \begin{equation}
            \label{sph-umdest}
            \|M_Sf\|_{L^p(\Ha; Z)}\leq C(n,p,Z) \|f\|_{L^p(\Ha; Z)}\quad\quad (f\in L^p(\Ha; Z)).
        \end{equation}
        Indeed, assuming \eqref{sph-umdest}, the proof can be completed by adapting the arguments appropriately as in the proof of Theorem \ref{Main-thm}. In fact,  first, for any $\sigma\in U(n)$, by repeating the same arguments and replacing $l^q$ with $Z$, we observe that for $m\geq n_0$
        \begin{align*}
            \|\widetilde{M}^{\sigma}_Sf\|_{L^p(\Ha; Z)}\leq C(m,p,Z) \|f\|_{L^p(\Ha; Z)}\quad\quad (f\in L^p(\Ha; Z))
        \end{align*}
        and then for any $n>n_0$, we get 
        \begin{align*}
          \|M_Sf\|_{L^p(\Ha; Z)}\leq   \|\widetilde{M}^{\sigma}_Sf\|_{L^p(\Ha; Z)}\leq C(n_0,p,Z) \|f\|_{L^p(\Ha; Z)}\quad\quad (f\in L^p(\Ha; Z))
        \end{align*}
        Finally, using Lemma \ref{hlmax-ptest} along with the boundedness of UMD lattice $Z$-valued Euclidean maximal operator $M_{\R}$, we thus obtain 
        $$ \|\mathcal{M}f\|_{L^p(\Ha; Z)}\leq C(n_0,p,Z) \|f\|_{L^p(\Ha; Z)}$$
        for any $n\geq n_0.$ Now for $n<n_0$, we apply Proposition \ref{withdimumdbdd} to establish the desired dimension-independent bound. 
        
        Therefore, to complete the proof of the theorem, it remains to verify the claim \eqref{sph-umdest}. To do this, we proceed as in the proof of Theorem \ref{Sph-Main-Heisenber}. 
        For $x\in \Ha$ and $\omega\in\Omega$, as in \eqref{decomposeM}, we decompose
        \begin{align}
        \label{Msdecomp}
            M_Sf(x,\omega)\leq M_0f(x,\omega)+\sum_{k\ge 1}\big(M_{k,0}f(x,\omega)+\sum_{1\le l<k/3} M_{k,l}f
(x,\omega)+ \widetilde M_kf(x,\omega)\big).
        \end{align}
          Here, as mentioned earlier, we have 
          $$M_0f(x,\omega)\leq C(n) \mathcal{M}f(x,\omega)$$
          which then in view of Proposition \ref{withdimumdbdd} implies 
          \begin{equation}
              \label{umd-M0est}
              \|M_0f\|_{L^p(\Ha; Z)}\leq C(n,p,Z) \|f\|_{L^p(\Ha; Z)}.
          \end{equation}
         To estimate the remaining terms in the decomposition \eqref{Msdecomp}, we utilize the complex interpolation technique for Banach spaces.  We first record that there exists an UMD Banach lattice $X$ on $\Omega$ such that $Z$ can be obtained as the complex interpolation method 
         $[X, L^2(\Omega)]_{\theta}=Z$ with $\theta\in (0,1).$
         Consider the operators 
         $$\mathcal{B}_{k, l} :  L^r(\Ha; E) \to L^r(\Ha; E(L^\infty(\,(0, \infty)\,))),$$
defined by $$\mathcal{B}_{k,l}f(x,\omega,t) = f\ast \mu^{k, l}_{t}(x,\omega) \quad\quad ((x,\omega,t)\in \Ha\times \Omega\times (0,\infty)).$$
Consequently, $$\| M_{k,l}f\|_{L^p(\Ha; E)}=\|\mathcal{B}_{k,l}f\|_{L^p(\Ha; E(L^\infty(\,(0, \infty)\,)))}.$$
Now as $L^2(\Omega)$-norm and $L^2(\Ha)$-norm commutes, in view of Proposition \ref{PropMS}, and Theorem \ref{l2}, it follows that 
\begin{align}
\label{bkl-L2-umd-est}
  \|\mathcal{B}_{k,l}f\|_{L^2(\Ha; L^2(\Omega)(L^\infty(\,(0, \infty)\,)))}  \leq C(n) \sqrt{k}\,2^{-k(2n-2)/2}\|f\|_{L^2(\Ha; L^2(\Omega))}.
\end{align}
Also, using \eqref{ptwise}, we have 
\begin{align*}
    \| \mathcal{B}_{k,l}f(x,\omega,\cdot)\|_{L^{\infty}(0,\infty)}\leq C(n)\, 2^{2k} \mathcal{M}f(x,\omega)
\end{align*}
which then by Proposition \ref{withdimumdbdd} yields that
\begin{align}
\label{bkl-Lr-umd-est}
    \|\mathcal{B}_{k,l}f\|_{L^r(\Ha; E(L^\infty(\,(0, \infty)\,)))}\leq C(n, r, E)\, 2^{2k} \|f\|_{L^r(\Ha; E)}
\end{align}
for any $1<r<\infty.$ 

Using complex interpolation method $[X(L^\infty), L^2(\Omega)(L^\infty)]_{\theta}\hookrightarrow [X, L^2(\Omega)]_{\theta}(L^\infty)= Z(L^\infty)$, in view of \eqref{bkl-L2-umd-est}, and \eqref{bkl-Lr-umd-est}, we have 
\begin{equation}
    \label{bkl-lp-umdZ-est}
     \|\mathcal{B}_{k,l}f\|_{L^p(\Ha; Z(L^\infty(\,(0, \infty)\,)))}\leq C(n, r, Z)\sqrt{k}\,2^{k \gamma(n, \theta)}\, \|f\|_{L^r(\Ha; Z)},
\end{equation}
where $$\gamma(n, \theta):=-\frac{(2n-2)}{2}\theta +2(1-\theta)=2-\theta \frac{2n+2}{2}=2-\theta\frac{Q}{2} ,$$ for  any $\theta\in (0, 1),$ with an appropriate choice of $r\in (1,\infty)$ so that  
$$\frac1p = \frac\theta 2 + (1 - \theta) \frac1{r}= \frac1r-\theta\left(\frac1r-\frac12\right).$$
Now choose $\theta$ sufficiently small, i.e., $\theta<4/Q$ so that $\gamma(\theta, n)<0. $ Therefore, setting  $n_0:= \max \{ 2, \lfloor 2/\theta\rfloor\}$, we see that for $n\geq n_0$
\begin{align*}
     \|M_{k,l}f\|_{L^p(\Ha; Z(L^\infty(\,(0, \infty)\,)))}\leq C(n, r, Z)\sqrt{k}\,2^{k \gamma(n, \theta)} \|f\|_{L^r(\Ha; Z)}
\end{align*}
which is true for all $l\in [0,k/3),~k\geq 1.$
Moreover, a similar argument as above provides for any $k\geq 1$
\begin{align*}
     \|\widetilde{M}_{k}f\|_{L^p(\Ha; Z(L^\infty(\,(0, \infty)\,)))}\leq C(n, r, Z)\sqrt{k}\,2^{k \gamma(n, \theta)} \|f\|_{L^r(\Ha; Z)}.
\end{align*}
Finally, adding all above along with \eqref{umd-M0est}, considering \eqref{Msdecomp}, we obtain 
\begin{align*}
     \|M_Sf\|_{L^p(\Ha; Z)}\leq C(n,p,Z) \|f\|_{L^p(\Ha; Z)}
\end{align*}
This verifies the claim \eqref{sph-umdest}, thereby completing the proof of the theorem.
    \end{proof}

\subsection*{Concluding remarks} As a concluding remark we point out that our main dimension-free vector valued estimate is also true for the Hardy-Littlewood maximal function with respect to averages over balls in Carnot-Carath\'eodory metric on the Heisenberg group. To illustrate, recall that a curve $\gamma:[a, b]\to \Ha$ is called a \textit{horizontal curve} joining the two points $x, y\in \Ha$ if $\gamma(a)=x, \gamma(b)=y$ and $\gamma'(s)\in \text{span}\{X_{1}, \cdots, X_{n}, X_{n+1},\cdots, X_{2n}\}$ for all $s\in [a, b].$ Further, the Carnot-Carath\'eodory distance between $x$ and $y$ is defined as 
\[d_{CC}(x, y):=\inf_{\gamma}\int_{a}^{b}\|\gamma'(s)\|\, ds,\]
where the infimum is being taken over all \textit{horizontal curves} joining $x$ and $y.$ The associated Hardy-Littlewood maximal function is defined as 

\[\mathcal{M}_{CC}f(x)=\sup_{r>0}\frac{1}{|B_{CC}(x, r)|}\int_{B_{CC}(x, r)}|f(y)|\, dy,\]
where we denote $B_{CC}(x, r):=\{y: d_{CC}(x, y)<r\}$ and $|B_{CC}(x, r)|$ is the Lebesgue measure of the Carnot-Carath\'edory ball. Arguing exactly as in the proof of Theorem~\ref{Main-thm} we have the following dimension free estimates for the Hardy-Littlewood maximal function over Carnot-Carath\'eodory balls:
\begin{theorem}
\label{Main-thm-CC}
Let $1<p, q<\infty.$ Then there exist a constant $C(p, q),$ independent of dimension, such that
\[
\biggl\|\Bigl(\sum_{j=1}^{\infty}|\mathcal{M}_{CC}f_j(\cdot)|^q\Bigr)^\frac{1}{q}\biggr\|_{L^p(\Ha)}\leq C(p, q)\, \biggl\|\Bigl(\sum_{j=1}^{\infty}|f_j(\cdot)|^q\Bigr)^\frac{1}{q}\biggr\|_{L^p(\Ha)},
\]
for any  sequence $(f_j)_{j\geq 1}$  of measurable functions defined on $\mathbb{H}^n$ such that $\left(\sum_{j=1}^\infty|f_j(\cdot)|^q\right)^{\frac1q}\in L^p(\mathbb{H}^n).$

Moreover, arguing as in Theorem~\ref{main-thm2}, if $Z$ is a UMD Banach lattice, then there exists a constant $C(p, Z),$ independent of $n,$ such that
$$\bigg\| \big\|\mathcal{M}_{CC}(f)(\cdot, z)(x)\big\|_{Z}\bigg\|_{L^p(\Ha)}\leq C(p, Z)\, \|f\|_{L^p(\Ha,\, Z)}.$$
\end{theorem}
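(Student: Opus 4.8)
The plan is to reduce the Carnot--Carath\'eodory statement to the Kor\'anyi case already established in Theorem~\ref{Main-thm} and Theorem~\ref{main-thm2}. The essential point is that the scalar maximal operators $\mathcal{M}_{CC}$ and $\mathcal{M}$ are pointwise comparable with a \emph{dimension-free} comparability constant. Indeed, by a classical ball-box type estimate (see, e.g., the monograph of Bella\"iche--Risler or Montgomery), there are absolute constants $c_1,c_2>0$, independent of $n$, such that $c_1\,|x|\le d_{CC}(0,x)\le c_2\,|x|$ for all $x\in\Ha$; this follows because both metrics are left-invariant, homogeneous of degree one with respect to the parabolic dilations $\delta_r$, and continuous, so their ratio is a continuous positive function on the compact Kor\'anyi unit sphere, hence bounded above and below --- and the bounds may be taken uniform in $n$ since the vector fields $X_j,X_{n+j}$ have a product structure that does not degenerate as $n\to\infty$. (Alternatively one can quote the known fact that $d_{CC}$ and $|\cdot|$ are globally equivalent on each $\Ha$ with uniform constants; this is folklore and can be cited.) As a consequence $B_{CC}(x,c_1^{-1}r)\supseteq B(x,r)$ and $B_{CC}(x,r)\subseteq B(x,c_1^{-1}r)$, so by the doubling property of Lebesgue measure on $\Ha$ --- where the doubling constant is $2^{Q}=2^{2n+2}$, again with no worse than polynomial dependence on $n$, which is harmless after the reduction --- one obtains $\mathcal{M}_{CC}f(x)\le C\,\mathcal{M}f(x)$ pointwise.

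With this pointwise domination in hand, the first asserted inequality is immediate: for any sequence $(f_j)$ with $\bigl(\sum_j|f_j|^q\bigr)^{1/q}\in L^p(\Ha)$ one has $\bigl(\sum_j|\mathcal{M}_{CC}f_j|^q\bigr)^{1/q}\le C\bigl(\sum_j|\mathcal{M}f_j|^q\bigr)^{1/q}$ pointwise, and then Theorem~\ref{Main-thm} yields
\[
\biggl\|\Bigl(\sum_{j=1}^{\infty}|\mathcal{M}_{CC}f_j(\cdot)|^q\Bigr)^\frac{1}{q}\biggr\|_{L^p(\Ha)}\le C\,C(p,q)\,\biggl\|\Bigl(\sum_{j=1}^{\infty}|f_j(\cdot)|^q\Bigr)^\frac{1}{q}\biggr\|_{L^p(\Ha)},
\]
with the constant $C\,C(p,q)$ independent of $n$. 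The UMD lattice-valued statement follows in exactly the same way: the pointwise bound $\mathcal{M}_{CC}f(x,\omega)\le C\,\mathcal{M}f(x,\omega)$ holds for every $\omega$, hence taking the lattice absolute value $|\cdot|_Z$ and then the $L^p(\Ha)$ norm and invoking Theorem~\ref{main-thm2} gives the claim with constant $C\,C(p,Z)$ independent of $n$.

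Alternatively --- and this is the route the phrase ``arguing exactly as in the proof of Theorem~\ref{Main-thm}'' suggests --- one can rerun the argument verbatim with $\mathcal{M}$ replaced by $\mathcal{M}_{CC}$ throughout: Lemma~\ref{hlmax-ptest} continues to hold with $\mathcal{M}_{CC}$ on the left, because its proof only uses that $B_{CC}(0,r)$ fibers over the central variable $\bar x$ with fibers that are, up to the uniform metric equivalence, comparable to Euclidean balls in $\R^{2n}$ on which $M_S$ controls the spherical averages; the weight $\varphi_r$ arising from the slicing is again radially decreasing on $\R$ after the harmless bounded distortion, so $\cite[Theorem 2.1.10]{Classical-Gra}$ still applies and one gets $\mathcal{M}_{CC}f(x)\le M_{\R}(M_Sf)(x)$. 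From there the dimension-reduction step via Theorem~\ref{Sph-Main-Heisenber} and the splitting into the cases $n\le\max\{\tfrac1{p-1},\tfrac1{q-1},p-1,q-1\}$ and $n>n_0$ goes through unchanged. I would present the first (pointwise comparison) route as the main argument since it is shortest, and remark that the second route also works.

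The main obstacle is making the metric comparison $d_{CC}\simeq|\cdot|$ genuinely \emph{dimension-uniform}: one must check that the comparability constants $c_1,c_2$ do not blow up with $n$. This is where a little care is needed --- it is not enough to say ``two homogeneous norms on a fixed group are equivalent,'' since that gives constants depending on $n$ a priori. The clean way is to exhibit explicit competitor horizontal curves: connecting $0$ to $(\underline x,\bar x)$ by first moving in the horizontal directions to adjust $\underline x$ (length $\|\underline x\|$) and then running a small horizontal loop to generate the required central displacement $\bar x$ (length $\lesssim |\bar x|^{1/2}$), one gets $d_{CC}(0,x)\lesssim\|\underline x\|+|\bar x|^{1/2}\simeq|x|$ with absolute implied constant, and the reverse inequality $d_{CC}(0,x)\gtrsim|x|$ follows from the fact that a horizontal path of length $L$ stays inside $B(0,CL)$ (the sub-Riemannian distance dominates a constant times the Kor\'anyi distance, by integrating $|\gamma'|$ and using that $X_j$ have bounded coefficients on bounded sets, scaled parabolically) --- and none of these estimates sees the dimension except through absolute constants. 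Once this uniform equivalence is recorded, everything else is a one-line deduction from the already-proved theorems.
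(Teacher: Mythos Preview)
Your primary route has a genuine gap. Even granting a dimension-free metric equivalence $c_1|x|\le d_{CC}(0,x)\le c_2|x|$, the pointwise comparison $\mathcal{M}_{CC}f\le C\,\mathcal{M}f$ does \emph{not} follow with a dimension-free $C$. From the inclusions $B(x,r/c_2)\subseteq B_{CC}(x,r)\subseteq B(x,r/c_1)$ you get
\[
\frac{1}{|B_{CC}(x,r)|}\int_{B_{CC}(x,r)}|f|\;\le\;\frac{|B(x,r/c_1)|}{|B_{CC}(x,r)|}\cdot\mathcal{M}f(x)\;\le\;\Bigl(\tfrac{c_2}{c_1}\Bigr)^{Q}\mathcal{M}f(x),
\]
and since the two metrics are genuinely different one has $c_2/c_1>1$, so $(c_2/c_1)^{Q}=(c_2/c_1)^{2n+2}$ blows up exponentially in $n$. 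Your own parenthetical betrays the problem: you write that the doubling constant $2^{Q}=2^{2n+2}$ has ``no worse than polynomial dependence on $n$'' --- but $2^{2n+2}$ is exponential, and in any case the target is a bound independent of $n$, so even polynomial growth would be fatal. There is no ``reduction'' already performed at this stage to absorb it: in this route you are simply invoking Theorem~\ref{Main-thm} after the pointwise bound, so the constant $(c_2/c_1)^{2n+2}$ sits in front of everything.

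Your alternative route --- rerunning the proof of Theorem~\ref{Main-thm} with $\mathcal{M}_{CC}$ in place of $\mathcal{M}$ --- is the correct one, and is exactly what the paper intends. But your sketch of why Lemma~\ref{hlmax-ptest} persists is not quite right either: appealing to ``harmless bounded distortion'' again smuggles in the bad volume ratio. What actually makes the lemma go through is that $d_{CC}(0,(\underline{x},\bar{x}))$ depends only on $(|\underline{x}|,|\bar{x}|)$ (by $U(n)$-invariance and the reflection $\bar{x}\mapsto-\bar{x}$), so the slice of $B_{CC}(0,r)$ at height $\bar{x}$ is a genuine Euclidean ball in $\mathbb{R}^{2n}$, and its volume is a decreasing function of $|\bar{x}|$ (since $d_{CC}(0,(\underline{x},\bar{x}))$ is increasing in $|\bar{x}|$ for fixed $|\underline{x}|$). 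With this, the polar decomposition and the radially-decreasing-weight argument of Lemma~\ref{hlmax-ptest} apply verbatim to yield $\mathcal{M}_{CC}f\le M_{\mathbb{R}}(M_Sf)$ with no constant at all depending on $n$, and the rest of the proof (the $U(n)$-averaging, the dimension reduction via Theorem~\ref{Sph-Main-Heisenber}, and the small-$n$ case via Theorem~\ref{ThmFS}) is unchanged. You should present this as the argument and drop the metric-comparison route entirely.
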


\end{document}